\colorlet{mylinkcolor}{violet}
\colorlet{mycitecolor}{YellowOrange}
\colorlet{myurlcolor}{Aquamarine}
\newenvironment{algorithm-hbox}{\hbadness=10000\begin{algorithm}}{\end{algorithm}}
\newtheorem{theorem}{Theorem}
\newtheorem{conjecture}[theorem]{Conjecture}
\newtheorem{corollary}[theorem]{Corollary}
\newtheorem{proposition}[theorem]{Proposition}
\newtheorem{lemma}[theorem]{Lemma}
\theoremstyle{remark}
\newcommand{\set}[1]{\{#1\}}
\DeclareMathOperator\Inc{Inc}
\DeclareMathOperator\Min{Min}
\DeclareMathOperator\Max{Max}
\DeclareMathOperator\wcol{wcol}
\DeclareMathOperator\se{se}
\DeclareMathOperator\WReach{WReach}
\let\le\leqslant
\let\ge\geqslant
\let\leq\leqslant
\let\geq\geqslant
\let\subset\subseteq
\let\subsetneq\varsubsetneq
\let\epsilon\varepsilon
\renewenvironment{enumerate}{\begin{enumorig}[label=\textup{(\roman*)}, noitemsep, topsep=2pt plus 2pt, labelindent=.2em, leftmargin=*, widest=iii]}{\end{enumorig}}
\renewenvironment{itemize}{\begin{itemorig}[label=\textbullet, noitemsep, topsep=2pt plus 2pt, labelindent=.5em, labelsep=.5em, leftmargin=*]}{\end{itemorig}}
\let\old@setaddresses\@setaddresses
\def\@setaddresses{\bigskip\bgroup\parindent 0pt\let\scshape\relax\old@setaddresses\egroup}
\begin{document}
\title[POSETS WITH PLANAR COVER GRAPHS]{Dimension is Polynomial in Height\\
for Posets with Planar Cover Graphs}

\author[Kozik]{Jakub Kozik}
\email{jakub.kozik@uj.edu.pl}

\author[Micek]{Piotr Micek}
\address[J.\ Kozik, P.\ Micek]{Theoretical Computer Science Department\\
  Faculty of Mathematics and Computer Science, Jagiellonian University, 
  Krak\'ow, Poland}
\email{piotr.micek@uj.edu.pl}

\author[Trotter]{William T.\ Trotter}
\address[W.\ Trotter]{School of Mathematics\\
  Georgia Institute of Technology\\
  Atlanta, Georgia 30332\\
  U.S.A.}
\email{trotter@math.gatech.edu}

\thanks{P.\ Micek is partially supported by a Polish National Science 
  Center grant (SONATA BIS 5; UMO-2015/18/E/ST6/00299).}
\thanks{W.\ T.\ Trotter is partially supported by a Simons Collaboration grant.}

\newcommand{\cgA}{\mathcal{A}}
\newcommand{\cgB}{\mathcal{B}}
\newcommand{\cgC}{\mathcal{C}}
\newcommand{\cgD}{\mathcal{D}}
\newcommand{\cgF}{\mathcal{F}}
\newcommand{\cgG}{\mathcal{G}}
\newcommand{\cgH}{\mathcal{H}}
\newcommand{\cgU}{\mathcal{U}}
\newcommand{\cgM}{\mathcal{M}}
\newcommand{\cgP}{\mathcal{P}}
\newcommand{\cgR}{\mathcal{R}}
\newcommand{\cgW}{\mathcal{W}}
\newcommand{\mmD}{\mathbb{D}}
\newcommand{\Spec}{\operatorname{Spec}}
\newcommand{\MD}{\operatorname{MD}}
\newcommand{\MDE}{\operatorname{MDE}}

\newcommand{\Oh}{\mathcal{O}}

\date{\today}

\subjclass[2010]{06A07} 

\keywords{Planar graph, poset, cover graph, height, dimension}

\begin{abstract}
We show that height~$h$ posets that have planar cover graphs 
have dimension $\Oh(h^6)$.  Previously, 
the best upper bound was $2^{\Oh(h^3)}$.
Planarity plays a key role in our arguments, 
since there are posets such that 
(1) dimension is exponential in height and 
(2) the cover graph excludes $K_5$ as a minor.
\end{abstract}

\maketitle

\section{Introduction}\label{sec:introduction}

In this paper, we study finite partially ordered sets, \emph{posets} for
short, and we assume that readers are familiar with the basics of the
subject, including chains and antichains; minimal and maximal elements;
height and width; order diagrams (also called Hasse diagrams); and
linear extensions.  For readers who are new to combinatorics on posets,
several of the recent research papers cited in our bibliography
include extensive background information.

Following the traditions of the subject, \textit{elements} of a poset 
are also called \textit{points}. 
Recall that when $P$ is a poset, an element $x$ is
\textit{covered} by an element $y$ in $P$ when $x<y$ in $P$ and
there is no element $z$ of $P$ with $x<z<y$ in $P$.  We associate
with $P$ an ordinary graph $G$, called the \textit{cover graph} of $P$,
defined as follows.  The vertex set of $G$ is the ground set of $P$,
and distinct elements (now also called \emph{vertices}) $x$ and $y$ are adjacent in $G$ when
either $x$ is covered by $y$ in $P$ or $y$ is covered by $x$ in $P$.
We consider the edges of $G$ oriented by the order relation of $P$, i.e.\
an edge $xy$ is oriented from $x$ to $y$ when $x$ is covered by $y$ in $P$.

Dushnik and Miller~\cite{DM41} defined the \emph{dimension} of a 
poset $P$, denoted $\dim(P)$, as the least positive integer $d$ such
that there are $d$ linear orders $L_1,\dots,L_d$ 
on the ground set of $P$ such that $x\le y$ in $P$ if
and only if $x\le y$ in $L_i$ for each $i\in\set{1,\dots,d}$.
In general, there are many posets that have the
same cover graph, and among them, there may be posets which
have markedly different values of height, width and dimension.
Indeed, it is somewhat surprising that we are able to bound any 
combinatorial property of a finite poset in terms of graph 
theoretic properties of its cover graph.

However, Streib and Trotter~\cite{ST14} proved that dimension is
bounded in terms of height for posets that have a planar cover graph.
This stands in sharp contrast with a number of well-known
families of posets that have height $2$ but unbounded dimension 
(e.g.\ the standard examples discussed below).
The result from~\cite{ST14} prompted researchers to investigate in greater depth 
connections between dimension and graph theoretic properties of 
cover graphs.  Subsequently, it has been shown that
dimension  is bounded in terms of height for posets whose
cover graphs:
\begin{itemize}
\item Have bounded treewidth, bounded genus, or more generally
exclude an apex-graph as minor~\cite{JMMTWW};
\item Exclude a fixed graph as a (topological) minor~\cite{Walczak17, MW15};
\item Belong to a fixed class with bounded
expansion~\cite{JMW18}.
\end{itemize}
Moreover, the existence of bounds for dimension of posets with cover graphs 
in a fixed class can say something about the sparsity of the class.
Joret, Micek, Ossona de Mendez, and Wiechert~\cite{JMOdMW} proved that
a monotone class of graphs is nowhere dense if and only if 
for every $h\geq 1$ and every $\varepsilon>0$, posets of height $h$ 
with $n$ elements whose cover graphs are in the class have dimension 
$\Oh(n^{\varepsilon})$.

The best upper bound to date on dimension in terms of height for posets
that have planar cover graphs is $2^{\Oh(h^3)}$.   This result
can be extracted from~\cite{JMOdMW} via connections between dimension
for posets and \textit{weak-coloring numbers} of their cover
graphs.  We will give additional details on this work in the next
section.

Our main theorem improves this exponential bound to one which is
polynomial in~$h$.

\begin{theorem}\label{thm:main}
If $P$ is a poset of height $h$ and the cover graph of $P$
is planar, then $\dim(P)=\Oh(h^6)$.
\end{theorem}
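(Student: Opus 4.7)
The plan is to bound $\dim(P)$ by constructing a colouring of the critical pairs of $P$ with $\Oh(h^6)$ colours that has no monochromatic alternating cycle; equivalently, to partition the critical pairs into $\Oh(h^6)$ reversible families, each of which can be reversed by a single linear extension. This reformulation turns the theorem into a topological/combinatorial counting problem about the plane embedding of the cover graph.

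I would first fix a plane drawing of the cover graph together with a layering of its vertices into $\Oh(h)$ layers naturally associated with the poset structure (in the spirit of the Streib--Trotter unfolding, where layers arise from iterated min/max-distance parities). Within such a layering, planarity severely constrains how chains of $P$ can interleave. For each critical pair $(a,b)$, I would attach a \emph{signature} consisting of (1) the layer indices of $a$, $b$, and a small constant number of distinguished witness points, and (2) a combinatorial type recording how the downward witness chain through $a$ and the upward witness chain through $b$ sit with respect to the plane drawing (for instance, the cyclic order in which they meet a distinguished bounding cycle, or their leftmost/rightmost placements among parallel options). The aim is $\Oh(h^2)$ possibilities from the layer data and a polynomial in $h$ from the topological type, multiplying to $\Oh(h^6)$.

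The heart of the argument would be a planarity lemma of the following flavour: if two critical pairs share a signature, then they cannot lie together on an alternating cycle. A would-be monochromatic alternating cycle should force two witness chains, drawn in the plane, to braid in a way ruled out by planarity---producing two disjoint paths between common face-incident regions that must cross. This is precisely where planarity must enter as a direct topological constraint rather than being mediated by $\wcol_{2h-2}(G)=\Oh(h^3)$: any colouring argument passing through the weak-colouring number pays an exponential in that parameter, which is exactly why the prior bound is $2^{\Oh(h^3)}$.

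The main obstacle will be calibrating the signature so that it is rich enough to make the planarity lemma true yet coarse enough that only $\Oh(h^6)$ signatures occur. I expect delicate issues with critical pairs whose natural witness chains are non-unique, hug the outer face, or span long combinatorially trivial stretches of the cover graph; handling these will likely require a preliminary pruning/unfolding of $P$, in the spirit of Streib--Trotter, before the main signature is assigned. The $h^6$ exponent is suggestive of roughly three ``height factors'' per side of the pair---say, a layer index, a sub-layer index, and a topological type each for $a$ and $b$---and I would organise the signature along those lines.
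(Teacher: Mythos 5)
Your proposal is a plan, not a proof: the two decisive components---a concrete signature and a planarity lemma showing that same-signature pairs cannot co-occur on an alternating cycle---are left entirely open, and these are precisely where all the difficulty lies. The paper organises the work very differently. A chain of reductions (min--max, unfolding to a singly constrained set, a weak-colouring step to a doubly constrained set, and a plane-surgery step to a \emph{doubly exposed} set) costs a multiplicative $\Oh(h^4)$. Then, for a doubly exposed set $I$, one builds an auxiliary digraph $H_I$ on $\Inc(A_I,B_I)$ in which directed paths yield standard examples, proves $\dim(I)\le\rho(I)^2$ where $\rho(I)$ is the longest directed path (a height-free statement), and finally proves $\rho(I)=\Oh(h)$ using separating paths through the drawing and a decomposition into $Z$-faces. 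Your sketch has no analogue of the standard-example digraph, no separating paths, and no $Z$-face structure; and your heuristic ``three height factors per side'' does not reflect how the exponent actually arises, which is $\Oh(h^4)\cdot\Oh(h^2)$, with the $h^2$ coming from squaring $\rho(I)$.

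There is also a pointed misconception: you write that ``any colouring argument passing through the weak-colouring number pays an exponential in that parameter,'' and use this to justify avoiding $\wcol$ altogether. That is not correct. The paper's Corollary~\ref{cor:doubly-constrained} does pass through $\wcol_{4h-4}(G)=\Oh(h^3)$ for planar $G$, but only as a \emph{multiplicative} factor of $\Oh(h^3)$ in the final bound. The earlier $2^{\Oh(h^3)}$ bound came from a dimension-versus-$\wcol$ inequality that is intrinsically exponential in $\wcol$, not from the mere act of invoking weak colouring. So your strategic premise---that one must route around $\wcol$ entirely---is false, and bypassing it does not by itself produce a polynomial bound; the real gain is the reduction to doubly exposed posets combined with the standard-example bound, neither of which appears in your proposal.
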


Planarity plays a crucial role in the existence of a polynomial bound. 
In~\cite{JMW_Planar_Posets}, Joret, Micek and Wiechert show that 
for each even integer $h\geq 2$, there is a height $h$ poset $P$
with dimension at least $2^{h/2}$ such that
the cover graph of $P$ excludes $K_5$ as a minor.

To discuss lower bounds, we pause to give the following construction
which first appears in~\cite{DM41}.  For each $n\ge2$, let $S_n$ be 
the height~$2$ poset with $\{a_1,a_2,\dots,a_n\}$ the set of minimal elements,
$\{b_1,b_2,\dots,b_n\}$ the set of maximal elements, and
$a_i<b_j$ in $S_n$ if and only if $i\neq j$. Posets in the
family $\{S_n:n\ge2\}$ are now called \emph{standard examples},
as $\dim(S_n)=n$ for every $n\ge2$.

To date, the best lower bound for the maximum dimension of 
a height~$h$ poset with a planar cover graph is $2h-2$,
and this bound comes from the ``double wheel'' construction 
given in~\cite{JMW_Planar_Posets}, and illustrated here
in Figure~\ref{fig:double-wheel}. To avoid clutter, we do not
show arrowheads in our figures.  Instead, we indicate directions
using color and accompanying narrative.  In this figure, the
black edges are oriented in each individual wheel from outside
to inside.  The elements of $\{a_1,\dots,a_n\}$ are minimal
elements so the red edges are oriented ``left-to-right'' and the
blue edges are oriented ``right-to-left.'' 
\begin{figure}
\begin{center}
\includegraphics[scale=1]{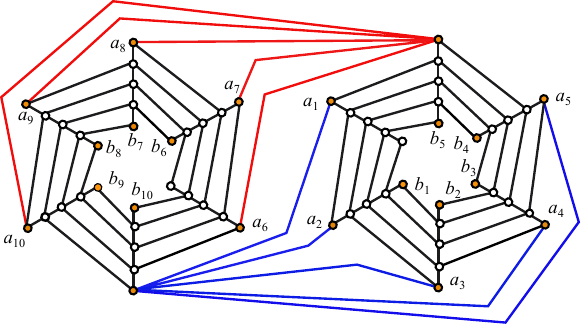}

\end{center}
\caption{We illustrate the double wheel construction when
$h=5$. Note that the elements 
$a_1,\ldots,a_{10}$ and $b_1,\ldots,b_{10}$ induce a standard 
example, so the dimension of the depicted poset is at least $10$. On the other
hand, the height of $P$ is~$5$.}
\label{fig:double-wheel}
\end{figure}

Requiring that the diagram of a poset $P$ is planar is a stronger restriction 
than requiring that the cover graph of $P$ is planar.
Accordingly among posets that have planar cover graphs, some but not
all also have planar order diagrams.  Among the class of posets with planar diagrams,
Joret, Micek and Wiechert~\cite{JMW_Planar_Posets} showed 
that $\dim(P)\leq 192h+96$ when $P$ has height~$h$.

The remainder of this paper is organized as follows.  In the next
section, we prove three reductions to simpler problems, and we
give essential background material.
The proof of Theorem~\ref{thm:main} is given in the following
two sections, and we close with brief comments on challenging
open problems that remain.

\section{Preliminary Reductions and Background Material}\label{sec:reductions}


We list below some elementary, and well known, properties of dimension.  

\begin{enumerate}
\item Dimension is monotonic, i.e., if $Q$ is an induced subposet of $P$,
then $\dim(Q)\le\dim(P)$.
\item The dual of a poset $P$ is the poset $P'$ on the same ground set
of $P$ with $x< y$ in $P'$ if and only $x>y$ in $P$.  Then $\dim(P)=\dim(P')$.
\end{enumerate}

For the balance of this preliminary section, we fix a poset $P$.
We let $\Min(P)$ and $\Max(P)$ denote, respectively, 
the set of minimal elements and the set of maximal elemets of $P$. 
When $x$ is in $P$, we let $U_P(x)$ consist of all elements $u$ 
such that $x<u$ in $P$.
Dually, $D_P(x)$ consist of all elements $u$ 
such that $x>u$ in $P$.

We write $x\parallel_P y$ (also $x\parallel y$ in $P$)
when $x$ and $y$ are incomparable. 
Also, we let $\Inc(P)$ 
denote the set of all ordered
pairs $(x,y)$ with $x\parallel_P y$.  We will assume $\Inc(P)\neq\emptyset$;
otherwise $P$ is a chain and $\dim(P)=1$.  
When $(x,y)\in\Inc(P)$ and $L$ is a linear extension of $P$, we say 
that $L$ \textit{reverses} $(x,y)$ when $y<x$ in $L$.
A set $I \subseteq \Inc(P)$ is \emph{reversible} if there is a 
linear extension $L$ of $P$ which reverses every pair in $I$. 
Vacuously, the empty set is reversible. 
We then define $\dim_P(I)$ as the least $d\ge1$ such that 
$I$ can be covered by $d$ reversible sets. 
It is easily seen that $\dim(P)$ is equal to $\dim_P(\Inc(P))$.

Given sets $A,B\subseteq P$, we let $\Inc_P(A,B)$ be the set of pairs 
$(a,b)\in\Inc(P)$ with $a\in A$ and $b\in B$.
We use the abbreviation $\dim_P(A,B)$ for $\dim_P(\Inc(A,B))$.
Again, $\dim_P(A,B)=1$ when $\Inc_P(A,B)=\emptyset$. 
When the meaning of the poset $P$ remains fixed in the discussion, 
we will drop the subscript in the notation $\dim_P(I)$ and 
$\dim_P(A,B)$.


A sequence $((x_1,y_1), \dots, (x_k,y_k))$ of pairs from $\Inc(P)$ 
with $k \geq 2$ is an \emph{alternating cycle of size $k$} 
if $x_i\leq_P y_{i+1}$ for all $i\in\set{1,\ldots,k}$, cyclically 
(so $x_k\le_P y_1$ is required).  
Observe that if $((x_1,y_1), \dots, (x_k,y_k))$ is an alternating 
cycle in $P$, then any subset $I\subseteq \Inc(P)$ containing
all the pairs on this cycle is not reversible.

An alternating cycle $((x_1,y_1),\dots,(x_k,y_k))$ is \emph{strict} if we have
$x_i\le_P y_{j}$ if and only if $j=i+1$ (cyclically).
Note that in this case, $\{x_1, \dots, x_k\}$ and
$\{y_1, \dots, y_k\}$ are $k$-element antichains.  Note also that
in alternating cycles, we allow that $x_i=y_{i+1}$ for some or even
all values of $i$.
Trotter and Moore~\cite{TM77} observed the following:
A subset $I\subseteq\Inc(P)$ is reversible if and only 
if $I$ contains no strict alternating cycle.

When $x<_P y$, a sequence $W=(u_0,u_1,\dots,u_t)$
is called a \textit{witnessing path} (\emph{from $x$ to $y$}) when $u_0=x$, $u_t=y$
and $u_i$ is covered by $u_{i+1}$ in $P$ for each $i\in\set{0,1,\dots,t-1}$.

The following elementary lemma allows us to concentrate our attention on 
incomparable pairs from $\Inc(\Min(P),\Max(P))$.
See for instance~\cite[Observation~3]{JMTWW} 
for a proof.

\begin{lemma}[Reduction to min-max]\label{lem:min-max-reduction}
For every poset $P$, 
there is a poset $Q$ containing $P$ as an induced 
subposet such that
\begin{enumerate}
 \item The height of $P$ is the same as the height of $Q$;
 \item The cover graph of $Q$ is obtained from the cover graph of $P$ by 
  adding some degree $1$ vertices; and
\end{enumerate}
\[
\dim(P)\leq \dim_Q(\Min(Q), \Max(Q)).
\]
\end{lemma}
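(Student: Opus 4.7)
The plan is to construct $Q$ by attaching a pendant below every non-minimal element of $P$ and a pendant above every non-maximal one, and then to verify the three properties directly. For each $x \in P \setminus \Min(P)$, I would introduce a fresh element $x^{-}$ whose only cover relation in $Q$ is $x^{-} \lessdot x$; equivalently, $x^{-} <_{Q} z$ if and only if $x \leq_{P} z$. Symmetrically, for each $y \in P \setminus \Max(P)$, introduce $y^{+}$ with $y \lessdot y^{+}$ as its only cover. Then $P$ is an induced subposet of $Q$, with $\Min(Q) = \Min(P) \cup \{x^{-} : x \in P \setminus \Min(P)\}$ and $\Max(Q) = \Max(P) \cup \{y^{+} : y \in P \setminus \Max(P)\}$.

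Property (2) is immediate: each new element contributes exactly one cover edge, so the new vertices appear as degree-one leaves in the cover graph of $Q$, attached to the original cover graph of $P$. For property (1), I would observe that every chain in $Q$ has the form $x^{-} < x < \cdots < y < y^{+}$, where the middle portion is a chain in $P$ and one or both of the pendant endpoints may be absent. In the worst case when both endpoints are present, $x$ is non-minimal and $y$ is non-maximal in $P$, so the $P$-chain from $x$ to $y$ extends inside $P$ by one element strictly below $x$ and one element strictly above $y$; this extended $P$-chain has at most $h$ elements, so the original $Q$-chain has at most $h$ elements as well. The remaining cases (one or zero pendants) are handled by the same accounting and give the same bound, so the height of $Q$ equals that of $P$.

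For property (3), define $\phi \colon \Inc(P) \to \Inc(\Min(Q), \Max(Q))$ by sending $(x, y)$ to $(x^{*}, y^{*})$, where $x^{*} = x^{-}$ if $x \notin \Min(P)$ and $x^{*} = x$ otherwise, and dually for $y^{*}$. A short calculation using the description of $<_{Q}$ above shows that $x^{*} \parallel_{Q} y^{*}$, since any comparability $x^{*} \leq_{Q} y^{*}$ would force $x \leq_{P} y$, contradicting $(x,y) \in \Inc(P)$. Now let $d = \dim(\Min(Q), \Max(Q))$ and fix linear extensions $L_{1}, \dots, L_{d}$ of $Q$ that together reverse all pairs in $\Inc(\Min(Q), \Max(Q))$. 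Each restriction $L_{i}|_{P}$ is a linear extension of $P$, and whenever $L_{i}$ reverses $\phi(x, y) = (x^{*}, y^{*})$ the chain of inequalities $y \leq_{Q} y^{*} <_{L_{i}} x^{*} \leq_{Q} x$ forces $y <_{L_{i}} x$, so $L_{i}|_{P}$ reverses $(x, y)$ itself. Pulling back the cover through $\phi$ therefore yields a cover of $\Inc(P)$ by $d$ reversible sets, giving $\dim(P) \leq d$.

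The main obstacle is the height verification: one has to use precisely the fact that $x$ being non-minimal and $y$ being non-maximal in $P$ supplies the two units of slack needed to absorb the two pendant endpoints without exceeding the longest-chain length of $P$. Once that accounting is done, properties (2) and (3) follow from direct bookkeeping about degree-one attachments and a standard restriction argument for linear extensions.
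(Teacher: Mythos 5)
The paper cites \cite[Observation~3]{JMTWW} rather than proving the lemma directly, and your construction --- attaching a fresh pendant minimum $x^-$ below each non-minimal $x$ and a pendant maximum $y^+$ above each non-maximal $y$, mapping $(x,y)\in\Inc(P)$ to $(x^*,y^*)\in\Inc(\Min(Q),\Max(Q))$, and restricting linear extensions back to $P$ --- is exactly the standard argument behind that reference. The reasoning is correct; the only small imprecision is the claim that every chain in $Q$ ``has the form $x^-<x<\cdots<y<y^+$'': a chain containing $x^-$ need not contain $x$ itself, but since $x\le_P c$ for every $c$ in the $P$-portion one can always insert $x$ (and dually $y$) before applying the slack argument, so the height bound goes through unchanged.
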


\subsection{Constrained Subsets and Weak-Coloring Numbers}%
 \label{subsec:constrained-weakcol}

Let $P$ be a poset.
We say that a subset $I\subseteq \Inc(P)$ 
is \emph{singly constrained} in $P$ if
there is an element $x_0\in P$ 
such that $x_0 < b$ in $P$ for every $(a,b)\in I$.
To identify the element $x_0$, we will also say $I$ is singly constrained
\emph{by} $x_0$.  

The following lemma was used first in~\cite{ST14} for posets with
planar cover graphs and in a more complex form in \cite{JMTWW}.  
The underlying principle is the concept of \textit{unfolding}, which is 
an analogue of breadth first search for posets.

\begin{lemma}[Reduction to singly constrained]
\label{lem:unfolding}
For every non-empty poset $P$, 
there exists a poset $Q$ such that
\begin{enumerate}
  \item The height of $Q$ is at most the height of $P$.
  \item The cover graph of $Q$ is a minor of the cover graph of $P$.
  \item There is a minimal element $x_0$ in $Q$ such that 
  $x_0\leq q$ in $Q$ for all $q\in\Max(Q)$, and
\[
\dim_P(\Min(P),\Max(P)) \leq 2 \dim_Q(\Min(Q),\Max(Q)).
\]
\end{enumerate}
In particular, the set $\Inc(\Min(Q),\Max(Q))$ is 
singly constrained by $x_0$ in $Q$.
\end{lemma}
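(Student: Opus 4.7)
The plan is to apply the technique of \emph{unfolding}, the poset analogue of BFS alluded to in the paragraph before the lemma. Let $G$ be the cover graph of $P$; we may assume $G$ is connected, treating components independently otherwise. Fix any vertex $r\in V(G)$ and compute BFS-distances $d(\cdot)$ from $r$ in $G$. Split the maximal elements into two parity classes
\[
B_j = \{b \in \Max(P) : d(b) \equiv j \pmod 2\}, \qquad j \in \{0,1\}.
\]
Since every pair in $\Inc(\Min(P), \Max(P))$ has its second coordinate in one of these two classes, subadditivity of dimension gives
\[
\dim(\Min(P), \Max(P)) \leq \dim(\Min(P), B_0) + \dim(\Min(P), B_1),
\]
and we pick the $B \in \{B_0, B_1\}$ with $\dim(\Min(P), B) \geq \tfrac12 \dim(\Min(P), \Max(P))$. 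The factor of $2$ in the conclusion of the lemma arises exactly here.

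For each $b \in B$, fix a shortest $G$-path $\pi_b$ from $r$ to $b$; its length has the selected parity. Let $H = \bigcup_{b \in B} \pi_b$, a connected subgraph of $G$ containing $r$ and all of $B$. I would construct $Q$ from the subposet $P[V(H)]$ by contracting, along each $\pi_b$, matched pairs of consecutive cover-edges (each pair forming a local ``V'' or ``$\Lambda$'' in $P$, sharing a common middle vertex). Because $d(b)$ has the chosen parity, these pairings are compatible across all $b \in B$ simultaneously; once carried out, each $\pi_b$ collapses to a short chain from $x_0$ (the image of $r$) up to $b$ in $Q$.

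Granted this construction, the four conclusions should follow routinely. (i) Height does not increase, because every contraction identifies two elements already related by a cover, and vertex deletion cannot raise height. (ii) The cover graph of $Q$ arises from $G$ by vertex deletions and edge contractions, hence is a minor of $G$. (iii) By construction $x_0 \leq b$ in $Q$ for every $b \in B \subseteq \Max(Q)$; choosing $\Max(Q) = B$ makes $x_0$ a common lower bound of all maximal elements of $Q$, and $x_0$ can be arranged to be minimal in $Q$. (iv) A reversible cover of $\Inc(\Min(Q),\Max(Q))$ lifts through the contraction map to a reversible cover of $\Inc(\Min(P), B)$, because a strict alternating cycle in $P$ on pairs from $\Min(P) \times B$ would descend to one in $Q$. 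Combining with the parity step yields
\[
\dim(\Min(P),\Max(P)) \leq 2\dim(\Min(P),B) \leq 2\dim(\Min(Q),\Max(Q)).
\]

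The main obstacle is the middle step: making the contractions precise. One must choose them so that the result is actually a poset (no two elements become comparable in conflicting directions), so that every $b \in B$ remains a genuine maximal element of $Q$, and so that no new comparability between a minimal element and an intended maximal element is created that would merge previously independent reversible sets in the dimension lift. The parity-based pairing of cover-edges is what allows the contraction scheme to be executed consistently across all paths $\pi_b$ at once, and it is the reason the bound loses exactly a factor of~$2$.
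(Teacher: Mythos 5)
Your plan rests on two ideas that do not match the actual mechanism of unfolding and, as written, do not fit together into a proof. First, the ``breadth first search for posets'' alluded to in the paper is not graph BFS from an arbitrary root in the cover graph: it is an alternating expansion between $\Min(P)$ and $\Max(P)$ via the comparability relation (start from $x_0\in\Min(P)$, take all maximals above it, then all minimals below those, and so on). The layer structure produced by that process is what controls strict alternating cycles in $\Inc(\Min(P),\Max(P))$ and is the genuine source of the factor $2$. Splitting $\Max(P)$ by the parity of graph distance from a root has no bearing on the poset relation, so there is nothing that makes one of the two pieces ``singly constrained'' after the split.

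Second, the construction of $Q$ does not deliver what you need. A shortest cover-graph path $\pi_b$ from $r$ to $b$ is not a witnessing path; it alternates up and down arbitrarily. Contracting every other cover edge along $\pi_b$ does not turn it into a chain, so nothing forces $x_0\le b$ in $Q$, which is the whole point of the lemma. Moreover, by restricting to $V(H)=\bigcup_b V(\pi_b)$ you throw away essentially all of $\Min(P)$ (a minimal element $a$ partnered with some $b\in B$ in an incomparable pair has no reason to lie on any $\pi_{b'}$), so $\Inc(\Min(P),B)$ is not even represented in $Q$ and the claimed lift of reversible covers has nothing to act on. Beyond that, the contraction scheme itself is not shown to yield a poset at all, and even when it does, contraction can create new comparabilities, so: the claim that height cannot increase is unjustified; $B$ need not remain the set of maximal elements of $Q$ (new maximals appear after deleting vertices); $x_0$ need not be minimal in $Q$ (if $r$ retains any down-cover in $V(H)$); the cover graph of $Q$ need not be obtained from $G$ by edge contractions (new cover edges can appear in the transitive reduction); and a strict alternating cycle in $P$ need not descend to one in $Q$, because contraction may turn its incomparable pairs into comparable ones. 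You flag the ``precise contractions'' as the main obstacle, but that obstacle is the proof: the correct unfolding argument takes a convex subposet determined by the poset-BFS layers (so the cover graph is already an induced subgraph, no contractions needed) and gets the factor $2$ from an even/odd split of those layers, not from graph-distance parity.
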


We say that a subset $I$ of $\Inc(P)$ 
is \emph{doubly constrained} in $P$ 
when there is a pair of elements $(x_0,y_0)$ in $P$ such
that 
\begin{enumerate}
\item $x_0< y_0$ in $P$,
\item $x_0 < b$ in $P$ for every $(a,b)\in I$, and
\item $a < y_0$ in $P$ for every $(a,b)\in I$.
\end{enumerate}
As before, we will also say that $I$ is 
doubly constrained \emph{by} $(x_0,y_0)$. 

We would very much like to reduce to the case where we are bounding $\dim(I)$
when $I\subset \Inc(P)$ is doubly constrained.  Unfortunately, 
Lemma~\ref{lem:unfolding} will not be of assistance.  Instead, we will use a different 
reduction, one that will cost us an $\Oh(h^3)$-factor in the final bound.

The \emph{length} of a path in a graph is the number of its edges.
For two vertices $u$ and $v$ in a graph $G$, an $u$--$v$ \emph{path} is a 
path in $G$ with ends in $u$ and $v$. 
Let $G$ be a graph, and let $\sigma$ be an ordering of the vertices of $G$. 
For $r\in\set{0,1,2,\ldots}\cup\set{\infty}$ and two vertices $u$ and $v$ of $G$, we say that
$u$ is \emph{weakly $r$-reachable} from $v$ in $\sigma$, if there exists
an $u$--$v$ path of length at most $r$ 
such that for every vertex $w$ on the path, $u\leq_{\sigma} w$.
The set of vertices that are weakly $r$-reachable from a vertex $v$ in $\sigma$ 
is denoted by $\WReach_r[G, \sigma, v]$. 
The {\em weak $r$-coloring number} $\wcol_r(G)$ of $G$ is defined as
\[
 \wcol_r(G):=\min_{\sigma} \max_{v\in V(G)} \left|\WReach_r[G, \sigma, v]\right|.
\]
where $\sigma$ ranges over the set of all vertex orderings of $G$.
We call $\wcol_r(G)$ the $r$-\emph{th weak coloring number} of $G$.

Weak coloring numbers were originally introduced by Kierstead and 
Yang~\cite{KY03} as a generalization of the degeneracy of a graph (also 
known as the coloring number). Since then, they have been applied
in several novel situations (see Zhu~\cite{Zhu09} and Van den Heuvel 
et al.~\cite{vdHKQ19}, for examples).
We also have good bounds on weak coloring numbers.
For planar graphs, van den Heuvel et al.~\cite{VdHOdMQRS17} have shown that
the $r$-th weak coloring number is at most $\binom{r+2}{2}\cdot(2r+1)=\Oh(r^3)$.
See also a recent paper~\cite{JM22} with a lower bound in $\Omega(r^2\log r)$.

Here is a lemma on weak coloring numbers from~\cite{JMOdMW} that will
play an important role in the reduction to the doubly constrained case.

\begin{lemma}\label{lem:q-support}
 Let $P$ be a height $h$ poset, let $G$ be the cover graph of
 $P$, and let $c:= \wcol_{4h-4}(G)$.  
 If $I\subseteq\Inc(P)$, 
 then there is an element $z_0\in P$ such that the set 
 $J= \{(a,b)\in I: a < z_0\ \text{in $P$} \}$ satisfies  
\[ 
  \dim(J)\geq \frac{\dim(I)}{c} - 2.
\] 
\end{lemma}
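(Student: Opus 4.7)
The plan is a signature-and-pigeonhole argument built on top of a weak-coloring ordering of $G$. I would first fix an ordering $\sigma$ of $V(G)$ that realises $\wcol_{4h-4}(G)\le c$, so that every vertex $v$ satisfies $|\WReach_{4h-4}[G,\sigma,v]|\le c$. Throughout, I treat $I$ as a non-empty subset of $\Inc(\Min(P),\Max(P))$; were $I$ reversible there would be nothing to prove, so I may assume $I$ contains strict alternating cycles.

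For each pair $(a,b)\in I$, I would build a canonical closed walk $\pi(a,b)$ in $G$ of length at most $4h-4$ by concatenating up to four witnessing paths of $P$ (each of length at most $h-1$), chosen from a size-$2$ strict alternating cycle in $I$ through $(a,b)$. The walk is designed so that its first segment is an upward witness in $P$ emanating from $a$, reaching some element $y$ with $a\le_P y$. Let $z(a,b)$ be the $\sigma$-minimum vertex of $\pi(a,b)$. Weak reachability then forces $z(a,b)\in\WReach_{4h-4}[G,\sigma,u]$ for every vertex $u$ on $\pi(a,b)$, so in particular at most $c$ distinct values of $z(a,b)$ can occur once we fix a canonical anchor on the walk (for instance, the top of the first witnessing segment).

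Having fixed the signatures, I would colour each pair $(a,b)$ by the $\sigma$-rank of $z(a,b)$ inside its size-$c$ reach set, yielding a partition of $I$ into at most $c$ colour classes. By subadditivity of dimension, some class $I'$ satisfies $\dim(I')\ge\dim(I)/c$. Within $I'$ the signatures collapse to a single vertex $z_0\in P$, and the upward-first-segment construction forces $a<_P z_0$ on every pair of $I'$ except for those in at most two \emph{degenerate} subclasses --- typically the pairs where $z(a,b)=a$ and a boundary class where the zigzag collapses. Each degenerate subclass is reversible (covered by a single linear extension of $P$), which accounts for the additive $-2$ loss in the stated bound.

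The main obstacle will be engineering the signature walk $\pi(a,b)$ so that simultaneously (i) its length is bounded by $4h-4$, matching the parameter in $\wcol_{4h-4}$; (ii) its $\sigma$-minimum vertex is pinned onto an upward portion of a witness from $a$, so that the \emph{graph-theoretic} conclusion $z(a,b)\in\WReach_{4h-4}[G,\sigma,\cdot]$ upgrades to the \emph{order-theoretic} containment $a<_P z_0$; and (iii) the exceptional pairs split cleanly into at most two reversible families. The delicate point is bridging between weak reachability in $G$, which is indifferent to poset direction, and the order relation $a<_P z_0$, which is not. Carefully choosing the direction (up or down in $P$) of each of the (up to) four witnessing segments and controlling which segment can host the $\sigma$-minimum is what provides this bridge and ultimately delivers the bound $\dim(J)\ge\dim(I)/c-2$.
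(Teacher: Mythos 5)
The paper does not give its own proof of this lemma; it is imported verbatim from~\cite{JMOdMW}. Your overall framework --- fix an ordering $\sigma$ realising $\wcol_{4h-4}(G)\le c$, attach to each pair a signature walk of length at most $4h-4$, extract the $\sigma$-minimum vertex, colour by rank inside a weak-reach set, and pigeonhole on dimension --- does match the general shape of the weak-colouring argument. Two of the steps, however, contain genuine gaps.

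First, the walk $\pi(a,b)$ is not well-defined. A size-$2$ strict alternating cycle through $(a,b)$ need not exist in $I$, so for such pairs there is no walk. More importantly, you never invoke the hypothesis under which this lemma is actually applied, namely that $I$ is singly constrained by some $x_0$ (so $x_0<_P b$ for all $(a,b)\in I$). That hypothesis is precisely what produces a canonical short walk joining the two ends of an incomparable pair $(a,b)$ --- a path of length at most $h-1$ from $x_0$ up to $b$, together with paths witnessing comparabilities coming from the alternating-cycle structure. Without it, there is no a~priori reason the endpoints $a$ and $b$ of an incomparable pair should be connected by a walk of length $\Oh(h)$ in $G$.

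Second, and more seriously, the assertion that \emph{within $I'$ the signatures collapse to a single vertex $z_0$} does not follow from your colouring. Colouring $(a,b)$ by the rank of $z(a,b)$ inside $\WReach_{4h-4}[G,\sigma,\mathrm{anchor}(a,b)]$, with the anchor depending on the pair, is a map into a set of size $c$, so pigeonhole gives a class $I'$ with $\dim(I')\ge\dim(I)/c$. But two pairs that receive the same rank live inside two \emph{different} reach sets, so their $\sigma$-minima are in general different vertices: the rank is a local label, not a global identifier. The substantive content of the lemma --- which the proposal asserts rather than proves --- is that inside a monochromatic \emph{strict alternating cycle} the comparabilities $a_i\le_P b_{i+1}$ let one chain the signature walks of consecutive pairs together and thereby force all the $z(a_i,b_i)$ along the cycle to coincide with a single $z_0$ satisfying $a_i<_P z_0$; the remaining pairs of the class then split into at most two reversible pieces, yielding the additive $-2$. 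That chaining analysis is where the order-theoretic conclusion $a<_P z_0$ is actually established, and it is missing from your sketch.
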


We then have the following immediate corollary.

\begin{corollary}\label{cor:doubly-constrained}
Let $P$ be a poset with a planar cover graph, 
and let $x_0$ be an element of $P$ such that $x_0 < b$ in $P$ for every 
$b\in\Max(P)$. 
Let $I$ be a subset of $\Inc(P)$ that is 
singly constrained by $x_0$. 
Then there is a set $J\subset I$ and 
an element $y_0$ of $P$ such that 
$J$ is doubly constrained by $(x_0,y_0)$ in $P$ and
\[
 \dim(I)=\Oh(h^3)\cdot\dim(J).
\]
\end{corollary}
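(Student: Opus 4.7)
The plan is to apply Lemma~\ref{lem:q-support} directly to $P$ and then promote the singly constrained set it produces to a doubly constrained one. I set $c := \wcol_{4h-4}(G)$, where $G$ is the cover graph of $P$; since $G$ is planar, the bound of van den Heuvel et al.\ gives $c = \Oh(h^3)$. Lemma~\ref{lem:q-support} then supplies an element $z_0 \in P$ such that
\[
J := \{(a,b)\in I : a <_P z_0\}
\]
satisfies $\dim(J) \geq \dim(I)/c - 2$. Rearranging yields $\dim(I) \leq c(\dim(J)+2) \leq 3c\cdot \dim(J) = \Oh(h^3)\cdot \dim(J)$, matching the target inequality.

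To turn $J$ into a doubly constrained set I will keep the hypothesized $x_0$ as the lower witness and choose $y_0$ to be any maximal element of $P$ satisfying $y_0 \geq_P z_0$: if $z_0 \in \Max(P)$ I may take $y_0 := z_0$, and otherwise any maximal element strictly above $z_0$ works since $P$ is finite. The three conditions for $J$ being doubly constrained by $(x_0, y_0)$ are then routine. Condition~(1) $x_0 <_P y_0$ follows because $y_0 \in \Max(P)$ and the hypothesis puts $x_0$ below every maximal element. Condition~(2) $x_0 <_P b$ for every $(a,b)\in J$ is the same hypothesis applied to $b \in \Max(P)$. Condition~(3) holds for every $(a,b)\in J$ via the chain $a <_P z_0 \leq_P y_0$.

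The one remaining scruple is that being doubly constrained demands $J \neq \emptyset$. When $\dim(I) > 3c$, Lemma~\ref{lem:q-support} forces $\dim(J) > 1$ and hence $J \neq \emptyset$ automatically. When $\dim(I) \leq 3c = \Oh(h^3)$, the target inequality is already within reach as soon as any non-empty doubly constrained $J' \subseteq I$ is produced: for any $(a,b)\in I$, the hypothesis on $x_0$ rules out $a \in \Max(P)$ (otherwise $x_0 <_P a$ would contradict minimality of $a$), so a maximal element $y_0' >_P a$ exists and $J' := \{(a,b)\}$ is doubly constrained by $(x_0, y_0')$ via the same verification. The main conceptual subtlety throughout is that $y_0$ must be chosen to lie in $\Max(P)$ at or above $z_0$ rather than being taken as $z_0$ itself, since only then can the hypothesis on $x_0$ secure condition~(1); using $z_0$ alone would suffice for condition~(3) but leave condition~(1) unjustified.
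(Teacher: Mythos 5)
Your proof follows the paper's own argument essentially verbatim: apply Lemma~\ref{lem:q-support} with $c = \wcol_{4h-4}(G) = \Oh(h^3)$, choose $y_0 \in \Max(P)$ above $z_0$, observe $J$ is doubly constrained by $(x_0,y_0)$, and rearrange the inequality. The only refinement beyond the paper's proof is your explicit treatment of the possibility that $J = \emptyset$ (in which case one must fall back to a singleton to satisfy the non-emptiness requirement in the definition of doubly constrained), a minor gap the paper leaves implicit; this is a legitimate but small tidying-up rather than a different approach.
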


\begin{proof}
Let $G$ be the cover graph of $P$.
Apply Lemma~\ref{lem:q-support} with 
$c= \wcol_{4h-4}(G) = \Oh(h^3)$ to obtain the element $z_0$ and the set $J\subset I$ such that 
$J=\set{(a,b)\in I\mid a<z_0\ \text{in $P$}}$ and $\dim(J)\geq\dim(I)/c-2$.
Let $y_0$ be any maximal element with $z_0 \leq y_0$ in $P$.
Since $y_0\in\Max(P)$ we have $x_0 < y_0$ in $P$.
Evidently, $J$ is doubly constrained by the pair $(x_0,y_0)$.  
The inequality from Lemma~\ref{lem:q-support} 
becomes $\dim(I)\le c\cdot(2+\dim(J))$, and with this
observation, the proof of the corollary is complete.
\end{proof}

\subsection{A Reduction to the Doubly Exposed Case}
Let $P$ be a poset and let $I\subseteq\Inc(P)$ that is doubly constrained by $(x_0,y_0)$ in $P$. 
For convenience, 
we will say that 
a sequence $(P,x_0,y_0,I)$ is \emph{doubly constrained}.

Let $(P,x_0,y_0,I)$ be doubly constrained. 
Furthermore, suppose that the cover graph $G$ of $P$ is planar. 
Fix a plane drawing $\mathcal{D}$ of $G$ with $x_0$ on the 
exterior face.
Also, we append an imaginary edge $e_{-\infty}$ 
attached to $x_0$ in the exterior face.

Let $z$ be an element of $P$, and 
let $e_0$ be an edge of $G$ incident to $z$.
With $(z,e_0)$ fixed, we consider all the edges incident to $z$  ordered by the 
clockwise traversal around $z$ starting at $e_0$---this constitutes the 
\emph{left-to-right} $(z,e_0)$-\emph{ordering} of edges around $z$. Thus, $e$ is 
\emph{left of} $e'$ in this ordering if the clockwise traversal around $z$
starting at $e_0$ visits 
$e$ before $e'$, see Figure~\ref{fig:x0-left-right}.

Let $b$ and $b'$ be distinct elements of $P$. 
Also, let $W$ and $W'$ be paths in $G$ from $x_0$ to $b$ and 
$x_0$ to $b'$, respectively. 
We say that $W$ and $W'$ are $x_0$-\emph{consistent} if 
there is an element $z\not\in\set{b,b'}$ common to $W$ and $W'$ such that 
(1) $x_0Wz=x_0W'z$; and 
(2) $z$ is the only vertex common to $zWb$ and $zW'b'$. 
If $z\neq x_0$, let $e_0$ be the last edge of $x_0Wz$. 
If $z=x_0$, let $e_0$ be the imaginary edge $e_{-\infty}$.
Let $e$ and $e'$ be the first edge of $zWb$ and $zW'b'$, respectively. 
Now, we say that $W$ is $x_0$-\emph{left} ($x_0$-\emph{right}) of $W'$ if $e$ is left (right) of $e'$ in the $(z,e_0)$-ordering. 
Note that either $W$ is $x_0$-left of $W'$ or $W$ is $x_0$-right of $W'$.
See Figure~\ref{fig:x0-left-right}.

\begin{figure}
\begin{center}
\includegraphics[scale=1]{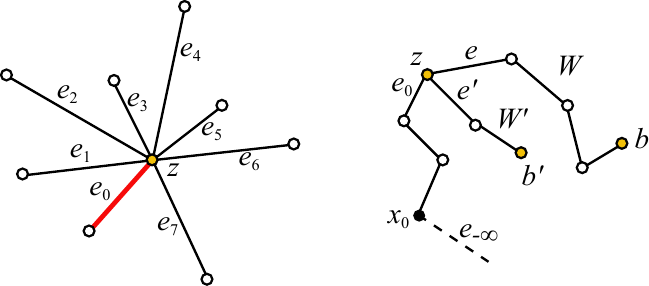}

\end{center}
\caption{Left: The edges incident to $z$ are enumerated with respect to 
the $(z,e_0)$-ordering.
Right: The paths $W$ and $W'$ are $x_0$-consistent, and 
$W$ is $x_0$-left of $W'$.
\label{fig:x0-left-right}}
\end{figure}

For the remainder of the paper, we say that 
a sequence $(P,G,x_0,y_0,I,\mathcal{D})$ is \emph{doubly exposed} if 
\begin{enumerate}
  \item $P$ is a poset and $G$ is the cover graph of $P$; 
  \item $x_0$ and $y_0$ are elements in $P$ with $x_0<y_0$; 
  \item $I$ is doubly constrained by $(x_0,y_0)$ in $P$;
  \item $G$ is planar and $\mathcal{D}$ is a plane drawing of $G$ with 
  $x_0$ and $y_0$ on the exterior face.
\end{enumerate}


\begin{lemma}\label{lem:double-exposed}
Let $(P,x_0,y_0,I)$ be doubly constrained, 
and assume that $P$ has a planar cover graph and height at most $h$.
Then there exists a sequence $(Q,G,u_0,v_0,J,\mathcal{D})$ that is doubly exposed and
\[
 \dim_P(I) \le 2(h-1)\dim_Q(J) +1.
\]
\end{lemma}

\begin{proof}
Fix a plane drawing of the
cover graph of $P$ with $x_0$ on the exterior face.
Fix also a witnessing path $W^{*}$ from $x_0$ to $y_0$ and refer to this chain as the \emph{spine}.
Label the points on the spine as $\{u_0,u_1,\dots,u_t\}$ with
$x_0=u_0$, $y_0=u_t$ and $u_i$ covered by $u_{i+1}$ in $P$ for
each $i\in\set{0,\dots,t-1}$.  Note that $t\le h-1$.

Recall that $I\subseteq \Inc(D_P(y_0),U_P(x_0))$. 
Note that the set $S=\set{(a,b)\in I\mid b\in W^*}$ is reversible (as you cannot build a strict alternating cycle with pairs within the set). 
Let $J=I-S$.
Thus, $\dim(I)\leq \dim(J)+1$. 

For each $b\in U_P(x_0)$, let $\tau(b)$ be the largest integer $i$
so that $u_i<_P b$.  Note that $0\le\tau(b)\leq t-1$.
Let $W_b$ be a witnessing path from $x_0$ to $b$ such that $W_b$
shares the initial segment $(u_0,u_1,\dots,u_{\tau(b)})$ with the
spine.
Note that $W^{*}$ and $W_b$ are $x_0$-consistent for each $b\in U_P(x_0)$ 
as long as $b\not\in W^*$.

We partition $U_P(x_0)-W^*$ into $B_{\textrm{left}}$ and $B_{\textrm{right}}$ in such a way that
$b$ is assigned to the set $B_{\textrm{left}}$ 
if $W_b$ is $x_0$-left of $W^{*}$. 
Dually,
we assign $b$ to $B_{\textrm{right}}$ if $W_b$ is $x_0$-right of $W^{*}$.

For each $a\in D_P(y_0)$, let
$\tau(a)$ be the least integer $i$ so that $a< u_i$ in $P$.  Now we have
$1\le \tau(a)\le t$. 
We partition the set $D_P(y_0)$ into $A_1\cup A_2\cup \cdots\cup A_t$ by assigning
$a$ to $A_i$ when $\tau(a)=i$.
Clearly,
\[
\dim(I) \leq 1+\dim(J) \leq 1+\sum_{s\in\set{1,\ldots,t}}\ 
\sum_{\text{dir}\in\set{\text{left}, \text{right}}} \dim(A_s,B_{\text{dir}}).
\]
It follows that there is some $s\in\set{1,\ldots,t}$ and 
$\text{dir}\in\set{\text{left}, \text{right}}$ so that
\[
\dim(I) \leq 1 + 2(h-1)\dim(A_s,B_{\textrm{dir}}).
\]

We assume that $\text{dir}=\text{right}$. 
From the details
of the argument, it will be clear that the proof is symmetric
in the other case.

We say that an edge $e=u_iv$ in the cover graph of $P$ is \emph{bad} 
if $0\le i<s$, $v$ is not on the spine, and 
$e$ is left of $u_iu_{i+1}$ in the $(u_i,u_{i-1}u_i)$-ordering.
(Note that $e=uv_i$ in the cover graph means $u_i < v$ in $P$ or $v<u_i$ in $P$.) 
We then define a poset $Q$ having 
the same ground set as $P$ with $x \leq y$ in $Q$ if and only if 
there is a witnessing path in $P$ from $x$ to $y$ avoiding bad edges.  

We claim that for every $a \in A_s$ and every $b\in B_{\text{right}}$, we have
$a \leq b$ in $Q$ if and only if $a \leq b$ in $P$. 
The forward implication is obvious.
To see the backward one,
let $a\in A_s$ and $b\in B_{\text{right}}$ with $a< b$ in $P$.
Then let $W$ be a witnessing path from $a$ to $b$ in $P$.
This path cannot use a bad edge as this would make $a < u_i$ in $P$ 
for some $i\in\set{1,\ldots,s-1}$ contradicting $a\in A_s$. 
Therefore, the claim holds and also
$\dim(A_s,B_{\text{right}})$ in $Q$ is the same as $\dim(A_s,B_{\text{right}})$ in $P$.

Note that the diagram and the cover graph of 
$Q$ are obtained simply by removing the bad edges from the diagram and
cover graph, respectively, of $P$.  It follows that the cover graph 
of $Q$ is planar.  Furthermore, $x_0$ and $u_s$ are on the same face,
and the set $\Inc(A_s,B_{\text{right}})$ is doubly exposed by the pair $(x_0,u_s)$. 
With this observation, the proof of the lemma is complete.
\end{proof}

Summarizing, we can combine Lemma~\ref{lem:unfolding}, 
Corollary~\ref{cor:doubly-constrained}, and Lemma~\ref{lem:double-exposed} 
to obtain:

\begin{corollary}\label{cor:doubly-exposed}
Let $P$ be a height $h$ poset with a planar cover graph.
Then there is a sequence $(Q,G,x_0,y_0,I,\mathcal{D})$ that is doubly exposed 
such that
$Q$ has height at most $h$ and
\[
 \dim(P)=\Oh(h^4)\cdot\dim_Q(I).
\]
\end{corollary}

The reader may note that the argument for the reduction 
actually proves that we may assume that the pairs in $I$ 
are min-max pairs. 
In order that our results can be applied in a more general setting, 
we elect to proceed with only the assumption that $I\subseteq \Inc(Q)$. 
is doubly exposed by $(x_0,y_0)$. 

We are now ready to begin the proof of our main theorem.

\section{Large Standard Examples in the Doubly Exposed Case}%
\label{sec:dim-and-breadth}

We pause here to make the following important comment:\quad
Height plays \emph{no} role in the arguments given in this
section.  

Throughout this section, 
we discuss a sequence $(P,G,x_0,y_0,I,\mathcal{D})$ that is 
doubly exposed.

We add to the drawing $\mathcal{D}$ an imaginary edge $e_{-\infty}$ in the exterior face to $x_0$, and we add
an imaginary edge $e_{+\infty}$ in the exterior face to $y_0$.

Let $B$ be an antichain in $P$ with $B\subseteq U_P(x_0)$.
Let $W_b$ be a witnessing path from $x_0$ to $b$, 
for each $b\in B$. 
We say that the family $\set{W_b\mid b\in B}$ is $x_0$-\emph{consistent} 
if $W_b$ and $W_{b'}$ are $x_0$-consistent whenever $b$ and $b'$ are distinct 
elements of $B$.
In this case, the edges of the paths in this family form a tree. 
We call this tree a \emph{witnessing tree} for $B$.
It is natural to use a single symbol, such as $T$, to denote this tree. 
Now for each $b\in B$, the path $W_b$ becomes $x_0Tb$. 
When $T$ is a witnessing tree for $B$, we write $b<_T b'$ 
when $x_0Tb$ is $x_0$-left of $x_0Tb'$. 
Note that $<_T$ is a linear order on elements of $B$.

We observe that if 
$B\subsetneq B'$ are both antichains in $U_P(x_0)$, 
and $T$ is a witnessing tree for $B$, then 
there is a witnessing tree $T'$ for $B'$ such that 
$x_0T'b=x_0Tb$ for all $b\in B$. 
In this case, $b<_T b'$ if and only if $b<_{T'}b'$ for all $b,b'\in B$.

Let $b$ and $b'$ be incomparable elements of $U_P(x_0)$.
We say that \emph{$b$ is $x_0$-left of $b'$} if 
$W$ is $x_0$-left of $W'$ whenever 
$W$ and $W'$ are $x_0$-consistent witnessing paths from $x_0$ to $b$ and $b'$, 
respectively. 
When $b$ is $x_0$-left of $b'$, we also say \emph{$b'$ is $x_0$-right of $b$}.

\begin{proposition}\label{pro:left-transitive}
Let $b$, $b'$, $b''$ be elements of $U_P(x_0)$. 
If $(b,b'),(b',b'')\in\Inc(P)$,  
$b$ is $x_0$-left of $b'$ and $b'$ is $x_0$-left of $b''$, then 
$(b,b'')\in\Inc(P)$ and $b$ is $x_0$-left of $b''$.
\end{proposition}

\begin{proof}
We first show that $(b,b'')\in\Inc(P)$. 
Let $W'$ be an arbitrary witnessing path from $x_0$ to $b'$. 
Then, let $W$ and $W''$ be witnessing paths from 
$x_0$ to $b$ and $b''$, respectively, such that 
$\set{W,W'}$ is $x_0$-consistent and    
$\set{W',W''}$ is $x_0$-consistent.
Let $w$ be the least element of $W$ which is not on $W'$. 
Also, let $w''$ be the least element of $W''$ which is not on $W'$. 
We claim that $wWb$ and $w''W''b''$ are disjoint.
To the contrary suppose that $z$ is a common element.
Now, $x_0WzW''b''$ and $W'$ are $x_0$-consistent and 
$x_0WzW''b''$ is $x_0$-left of $W'$,
contradicting the assumption that $b'$ is $x_0$-left of $b''$.

Now, suppose $b<b''$ in $P$ and let $U$ be a witnessing path 
from $b$ to $b''$. 
Then $x_0WbUb''$ and $W'$ are $x_0$-consistent, and 
$x_0WbUb''$ is $x_0$-left of $W'$, contradicting that 
$b'$ is $x_0$-left of $b''$. 
A symmetric shows that $b''\not< b'$ in $P$.
Thus, $(b,b'')\in\Inc(P)$.

Now we will argue that $b$ is $x_0$-left of $b''$. 
To the contrary, let $W$ and $W''$ be witnessing paths from $x_0$ 
to $b$ and $b''$, respectively, such that $W$ and $W''$ are 
$x_0$-consistent, and $W''$ is $x_0$-left of $W$. 
Choose a witnessing $W'$ from $x_0$ to $b'$ such that 
the union of $W$, $W'$, $W''$ form a witnessing tree. 
Then $W'$ is $x_0$-left of $W''$ and $x_0$-right of $W$. 
Clearly, this is impossible.
\end{proof}

In the doubly exposed setting $\mathbb{F}=(G,P,x_0,y_0,I,\cgD)$, 
we have two distinguished elements on the exterior face, namely $x_0$ and $y_0$. 
We proceed with a compact description of necessary definitions and notations involving $y_0$ 
that are dual to those introduced for $x_0$.

Let $a$ and $a'$ be distinct elements of $P$. 
Also, let $W$ and $W'$ be paths in $G$ from $a$ to $y_0$ and 
$a'$ to $y_0$, respectively. 
We say that $W$ and $W'$ are $y_0$-\emph{consistent} if 
there is an element $z\not\in\set{a,a'}$ common to $W$ and $W'$ such that 
(1) $y_0Wz=y_0W'z$ and 
(2) $z$ is the only vertex common to $zWa$ and $zW'a'$. 
If $z\neq y_0$, let $e_0$ be the last edge of $y_0Wz$. 
If $z=y_0$, let $e_0$ be the imaginary edge $e_{+\infty}$. 
Let $e$ and $e'$ be the first edge of $zWa$ and $zW'a'$, respectively. 
Now, we say that $W$ is $y_0$-\emph{left} ($y_0$-\emph{right}) of $W'$ if $e$ is left (right) of $e'$ in the $(z,e_0)$-ordering. 
Note that either $W$ is $y_0$-left of $W'$ or $W$ is $y_0$-right of $W'$.

Let $A$ be an antichain in $P$ with $A\subseteq D_P(y_0)$. 
Let $W_a$ be a witnessing path from $a$ to $y_0$, 
for each $a\in A$. 
We say that the family $\set{W_a\mid a\in A}$ is $y_0$-\emph{consistent} 
if $W_a$ and $W_{a'}$ are $y_0$-consistent whenever $a$ and $a'$ are distinct elements of $A$.
In this case, the edges of the paths in this family form a tree. 
Let $S$ be the obtained tree, 
we call this tree a \emph{witnessing tree} for $A$.
Now for each $a\in A$, the path $W_a$ becomes $aSy_0$. 
When $S$ is a witnessing tree for $A$, we write $a<_S a'$ 
when $aSy_0$ is $y_0$-left of $a'Sy_0$. 

Let $a$ and $a'$ be incomparable elements of $D_P(y_0)$.
We say that \emph{$a$ is $y_0$-left of $a'$} if 
$W$ is $y_0$-left of $W'$ whenever 
$W$ and $W'$ are $y_0$-consistent witnessing paths from $a$ to $y_0$ 
and from $a'$ to $y_0$, 
respectively. 
When $a$ is $y_0$-left of $a'$, 
we also say \emph{$a'$ is $y_0$-right of $a$}.
We state for emphasis a dual statement to Proposition~\ref{pro:left-transitive}. 

\begin{proposition}\label{pro:y0-left-transitive}
Let $a$, $a'$, $a''$ be elements of $D_P(y_0)$. 
If $(a,a'),(a',a'')\in\Inc(P)$,  
$a$ is $y_0$-left of $a'$ and $a'$ is $y_0$-left of $a''$, then 
$(a,a'')\in\Inc(P)$ and $a$ is $y_0$-left of $a''$.
\end{proposition}

When $\cgC$ is a simple closed curve in the plane, it splits
the points of the plane not on $\cgC$ into those that are in the
interior of the region bounded by $\cgC$ and those in the
exterior of this region.  In the discussion to follow, we will abuse 
terminology slightly and say that a point not on $\cgC$ is either
in the interior of $\cgC$ or it is in the exterior of $\cgC$,
dropping the reference to the region bounded by $\cgC$.
We then fix a simple closed curve $\cgC$ such that 
$x_0$ and $y_0$ are on $\cgC$ while all other vertices and 
edges of $G$ are in the interior of $\cgC$.


Let $N$ be a path from $x_0$ to $y_0$ in $G$. 
Then the clockwise portion of $\cgC$ beginning at $x_0$ and ending at $y_0$ 
together with $N$ traversed backwards is a simple closed curve. 
Elements of $G$ that are in the interior of this curve are said to be 
\emph{left of $N$}. 
Analogously, elements of $G$ that are not in $N$ and not left of $N$ are said to be 
\emph{right of $N$}.
These conventions are illustrated in Figure~\ref{fig:separating-path}.

The following self-evident proposition is stated for emphasis. 

\begin{proposition}\label{pro:self-evident}
Let $N$ be a path from $x_0$ to $y_0$ in $G$ and 
let $u$ be a vertex on $N$. 
If $u=x_0$, set $e_0$ to be the imaginary edge $e_{-\infty}$. 
Otherwise, set $e_0$ to be the last edge of $x_0Nu$. 
If $u=y_0$, set $e_1$ to be the imaginary edge $e_{+\infty}$. 
Otherwise, set $e_1$ to be the first edge of $uNy_0$. 
Let $N'$ be a non-trivial path starting at $u$ and 
let $e$ be the first edge of $N'$.
\begin{enumerate}
\item If $e$ left (right) of $e_1$ in the $(u,e_0)$-ordering, then 
$e$ is left (right) of $N$.
\item All edges and vertices of $N'$, except $u$, 
are on the same side of $N$ unless there is a vertex $u'\neq u$ 
that is common to $N$ and $N'$. 
\end{enumerate}
\end{proposition}

Let $N$ be a path from $x_0$ to $y_0$ in $G$. 
We call $N$ a \emph{separating path} if there exist (not necessarily distinct) elements 
$u$, $v$ of $N$ such that 
(1) $v\leq u$ in $P$ and $v$ does not occur before $u$ in traversing $N$ from $x_0$ to $y_0$;
(2) $x_0Nu$ and $vNy_0$ are witnessing paths; and 
(3) $uNv$ is a witnessing path from $v$ to $u$ traversed backwards.
To identify the elements $u$ and $v$ in this definition, 
we will write $N$ as $N(u,v)$.
When $N=N(u,v)$ is a separating path, 
we refer to $x_0Nu$ as the \emph{blue part} of $N$. 
Analogously, $vNy_0$ is the \emph{red part} of $N$, 
while $uNv$ is the \emph{black part} of $N$.
Note that if $z$ is on the path $N$, and $z$ is either on the red
part or the black part, then $v\le_P z$.  Symmetrically, if
$z$ is on the blue part or the black part, then $z\le_P u$.

A separating path $N(u,v)$ is \emph{associated with a comparability $a\leq b$} in $P$ 
when 
$a\leq v$, and $u\leq b$ in $P$.
The next proposition states that for all $a$, $b$ with $a\leq b$ in $P$, 
and all $W$, $W'$ witnessing paths from $x_0$ to $b$ and $b'$, respectively, 
we can find a separating path associated with $a\leq b$ that aligns well with $W$ and $W'$. 
See Figure~\ref{fig:separating-path}.

\begin{proposition}\label{pro:sep-path}
Let $(a,b)\in D_P(y_0)\times U_P(x_0)$ with $a\leq b$ in $P$. 
Let $W$ be a witnessing path from $a$ to $y_0$, and 
let $W'$ be a witnessing path from $x_0$ to $b$. 
Then there exists a separating path $N=N(u,v)$ associated with $a\leq b$ in $P$ 
such that 
(1) $u\in W'$ and the blue part of $N$ is $x_0W'u$; and 
(2) $v\in W$ and the red part of $N$ is $vWy_0$.
\end{proposition}
\begin{proof}
Suppose first that $W$ and $W'$ intersect 
and let $u$ be a common point. 
Then $N=x_0W'uWy_0$ satisfies the statement.
Otherwise, let $u$ be the least point in $P$ on $W$ such that 
$a<u$ in $P$. 
Then take $v$ to be the greatest point in $P$ on $W'$ such that 
$v<u$ in $P$.
Set $N_2$ to be an arbitrary witnessing path from $v$ to $u$ traversed backwards. 
Then $x_0W'uN_2vWy_0$ is the desired separating path.
\end{proof}

\begin{figure}
\begin{center}
\includegraphics[scale=.8]{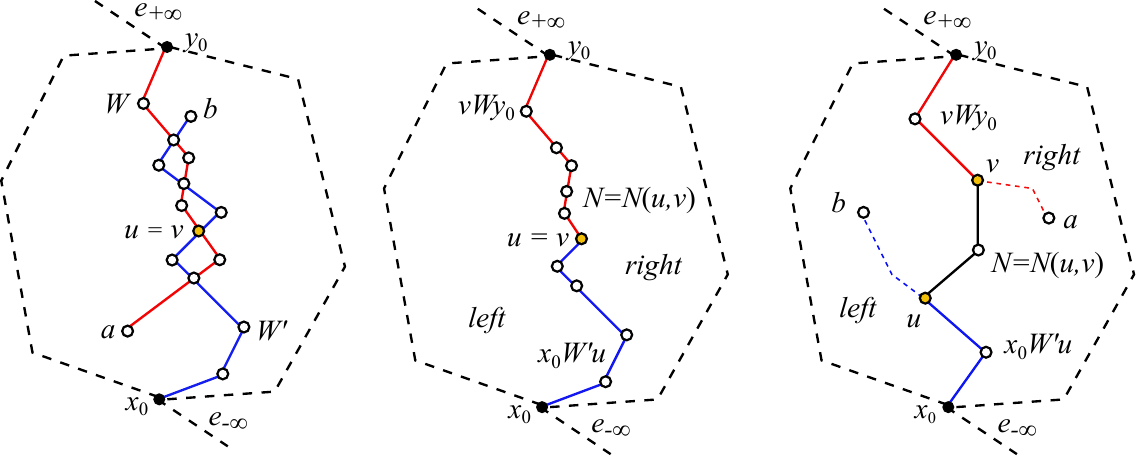}
\end{center}
\caption{
Left and Middle: A separating path $N$ for $a\leq b$ in $P$ 
such that the two fixed witnessing paths $W$ and $W'$ are intersecting.
Right: A separating path $N$ for $a\leq b$ in $P$ with 
$W$ and $W'$ disjoint.}
\label{fig:separating-path}
\end{figure}

The following elementary proposition has four symmetric statements: 
two for $D_P(y_0)$ and two for $U_P(x_0)$.

\begin{proposition}\label{pro:everything}
Suppose $a_1,a_2\in D_P(y_0)$, $b\in U_P(x_0)$, and $a_2\leq b$ in $P$.
For $i\in\set{1,2}$, let $W_i$ be a witnessing path from $a_i$ to $y_0$, and 
suppose that $W_1$ and $W_2$ are $y_0$-consistent. 
Suppose further that $W_1$ is $y_0$-left of $W_2$.
Let $N=N(u,v)$ be a separating path associated with $a_2\leq b$ in $P$ 
with $v$ on $W_2$ and $vNy_0=vW_2y_0$.
\begin{enumerate}
\item If $a_1\parallel b$ in $P$, then $a_1$ is right of $N$. 
\item If $a_1$ is not right of $N$, then $a_1\leq b$ in $P$, and $W_1$ contains a point from 
$x_0Nv$.
\end{enumerate}
\end{proposition}

\begin{proof}
Let $v'$ be the least point of $P$ common
to $vNy_0$ and $W_1$.  
If $v'=v$, then $a_1\leq v'=v \leq u \leq b$ in $P$. 
Therefore, $a_1\leq b$ in $P$ and $W_1$ contains $v$ which belongs to $x_0Nv$,
as desired. 
If $v'\neq v$, then $v< v'$ in $P$. 
Let $e'=(w,v')$ be the edge of $W_1$ that occurs immediately before $v'$. 
Then since $W_1$ is $y_0$-left of $W_2$, 
Proposition~\ref{pro:self-evident} implies that 
$e'$ is right of $N$.
If $a_1$ is not right of $N$, 
there exists a vertex $z$ of $a_1W_1w$ which is on $N$.
By the choice of $v'$, $z$ must belong to the $x_0Nv$. 
Therefore, $a_1\leq z\leq u \leq b$ in $P$, so $a_1\leq b$ in $P$, as desired.
\end{proof}

\begin{proposition}\label{pro:force-comparability}
Let $N$ be a separating path associated with $a\leq b$ in $P$. 
If $w<_P z$, $w$ is on one side of $N$ and $z$ is on the other,
then either $w<_P b$ or $a<_P z$.
\end{proposition}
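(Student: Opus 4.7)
The plan is to fix any witnessing path from $w$ to $z$ in $P$, use the planarity of $\mmD$ to force this path to meet $N$ at some common vertex $p$, and then read off one of the two conclusions directly from the part of $N$ on which $p$ lies.

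First I would choose an arbitrary witnessing path $W'$ from $w$ to $z$ in $P$ and consider its image in $\mmD$. Since $w$ lies in one half of the region bounded by $\cgC$ (cut by $N$) while $z$ lies in the other, and since $N$ is the common boundary of these two halves inside $\cgC$, the curve $W'$ must cross $N$. Because $\mmD$ is a planar drawing of the cover graph $G$, no two edges of $G$ meet in the interior of an edge; hence any intersection of $W'$ with $N$ must take place at a \emph{vertex}. Pick any such vertex $p$. Since $W'$ is a witnessing path from $w$ to $z$, we have $w \leq_P p \leq_P z$.

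Next I would split into three cases according to which part of $N$ contains $p$. If $p$ lies on the blue part $x_0Tu$, then $p$ lies on the initial segment of the witnessing path $x_0Tb$, so $p \leq_P u \leq_P b$, and therefore $w \leq_P p \leq_P b$; since $b \in \Max(P)$ and $w <_P z$, the equality $w=b$ would force $z >_P b$, which is impossible, so $w <_P b$. If $p$ lies on the black part $uWv$, then by construction $W$ is a witnessing path with $u \leq_P p \leq_P v$, and since $a <_P u$ by the definition of $u(a,b)$, we get $a <_P p \leq_P z$, giving $a <_P z$. Finally, if $p$ lies on the red part $vSy_0$, then $v \leq_P p$ along the witnessing subpath $vSy_0$, and $a \leq_P v$ because $v$ lies on the witnessing path $aSy_0$; combining these yields $a \leq_P p \leq_P z$, and since $a \in \Min(P)$ and $w <_P z$, the equality $z=a$ is impossible, so $a <_P z$.

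The only nontrivial step in this plan is the planarity argument forcing $W'$ and $N$ to share a vertex rather than merely crossing topologically; once this is in hand, the three cases are immediate from the defining inequalities $a <_P u$, $u \leq_P v$, $a \leq_P v$, and $u \leq_P b$, together with the minimality of $a$ and maximality of $b$.
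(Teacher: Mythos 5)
Your proof is correct and follows essentially the same approach as the paper: take a witnessing path from $w$ to $z$, use planarity to force a common vertex with $N$, and case on whether that vertex lies on the blue, black, or red part of $N$. The paper's proof is terser (it does not belabor that intersections occur at vertices or that the resulting inequalities are strict), but the underlying argument is the same; note that the strictness in each case also follows more directly from the fact that $w$ and $z$, being off $N$, cannot equal the common vertex $p\in N$.
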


\begin{proof}
Let $W$ be a witnessing path from $w$ to $z$.
Then $W$ and $N$ must intersect.
Let $q$ be a common point.
If $q$ is on the blue part of $N$, then
$w<_P q \leq_P b$.  If $q$ is on the red part of $N$, then
$a\leq_P q < z$.  If $q$ is on the black part of $N$, then
both $w<_P b$ and $a<_P z$ hold.
\end{proof}

Let $z$ be an element of $P$ and $X$ be a subset of elements of $P$. 
We say that
$z$ is \textit{enclosed by $X$} if there is a cycle $D$ in $G$ such that 
(1) all vertices of $D$ are in $X$; and
(2)~$z$ is in the interior of $D$.

\begin{proposition}\label{pro:strict-alternating}
Let $((a_1,b_1),\ldots,(a_k,b_k))$ be a strict alternating cycle of incomparable pairs from $D_P(y_0)\times U_P(x_0)$. 
Also, let $i$, $j$ be distinct integers from $[k]$. 
Then the following statements hold:
\begin{enumerate}
  \item $a_i$ is not enclosed by $U_P(a_j)$;\label{pro:item-a-not-enclosed}
  \item $b_i$ is not enclosed by $D_P(b_j)$;\label{pro:item-b-not-enclosed}
  \item either $a_i$ is $y_0$-left of $a_j$ or $a_i$ is $y_0$-right $a_j$;\label{pro:item-a-left-or-right}
  \item either $b_i$ is $x_0$-left of $b_j$ or $b_i$ is $x_0$-right $b_j$;\label{pro:item-b-left-or-right}
  \item $a_i$ is $y_0$-left of $a_j$ if and only if $b_{i+1}$ is $x_0$-right of $b_{j+1}$ (cyclically).\label{pro:item-a-b-equivalence}
\end{enumerate} 
\end{proposition}
The statement of Propositions~\ref{pro:strict-alternating} and~\ref{pro:2-cycle} are illustrated at Figure~\ref{fig:sac}.

\begin{figure}[h]
\begin{center}
\includegraphics[scale=0.8]{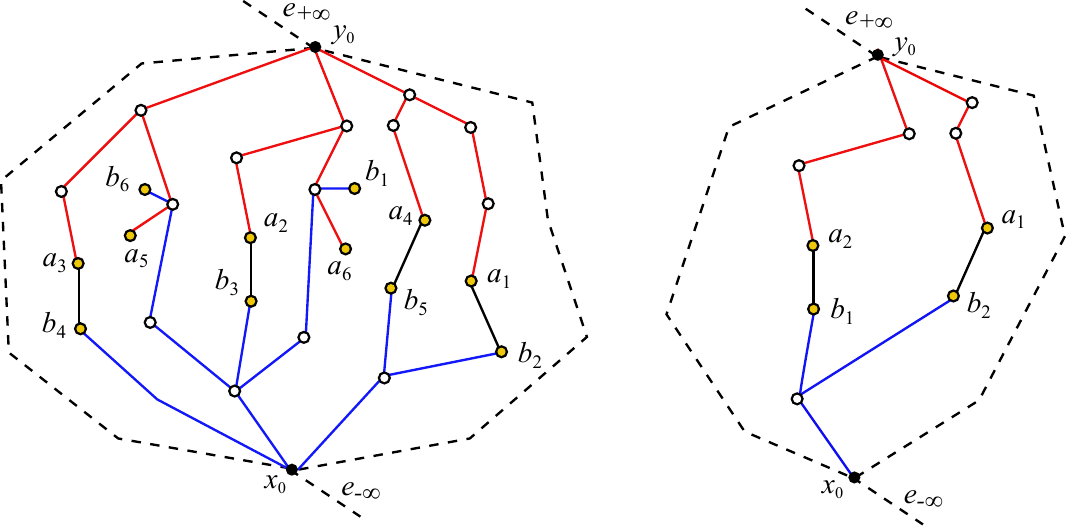}
\end{center}
\caption{Left: A strict alternating cycle $\set{(a_i,b_i)\mid i\in\set{1,\ldots,6}}$ with elements $a_1,a_4,a_6,a_2,a_5,a_3$ ordered from $y_0$-left to $y_0$-right, and $b_2,b_5,b_1,b_3,b_6,b_4$ ordered from $x_0$-right to $x_0$-left. Right: A strict alternating cycle $\set{(a_1,b_1),(a_2,b_2)}$ of length two.}
\label{fig:sac}
\end{figure}

\begin{proof}
Suppose first that $a_i$ is enclosed by $U_P(a_j)$ as 
evidenced by cycle $D\subseteq U_P(a_j)$ in $G$ with 
$a_i$ in the interior of $D$. 
Note also that $x_0$ is not in the interior of $D$ as it is on the exterior face.
Consider a witnessing path $W$ from $x_0$ to $b_{i+1}$. 
If $W$ intersects $D$, 
then a common element $d$ in both shows 
$a_j \leq d \leq b_{i+1}$ in $P$ which contradicts $i\neq j$. 
Thus $D$ and $W$ are disjoint. 
In particular, $b_{i+1}$ is in the exterior of $D$. 
Now consider a witnessing path $W'$ from $a_i$ to $b_{i+1}$. 
Then there is a point $d$ common to $W'$ and $D$. 
Again, this proves $a_j\leq d \leq b_{i+1}$ in $P$. 
The contradiction completes the proof of~\ref{pro:item-a-not-enclosed}. 
The argument for~\ref{pro:item-b-not-enclosed} is dual.

Next we prove statement~\ref{pro:item-a-left-or-right}. 
Let $U_{i+1}$ and $U_{j+1}$ be $x_0$-consistent witnessing paths from $x_0$ to $b_{i+1}$ and $b_{j+1}$, respectively. 
Suppose first that $U_{i+1}$ is $x_0$-left of $U_{j+1}$. 
We claim that $a_i$ is $y_0$-right $a_j$. 
To the contrary, 
suppose $W_i$ and $W_j$ are $y_0$-consistent paths from $a_i$ and $a_j$, respectively, to $y_0$ such that $W_i$ is $y_0$-left of $W_j$. 
Let $N=N(u,v)$ be a separating path associated with $a_i\leq b_{i+1}$ in $P$ such that (1) $u$ is on $U_{j+1}$ and $x_0Nu=x_0U_{j+1}u$; and 
(2) $v$ is on $W_j$ and $vNy_0=vW_jy_0$. 
Such a path exists by Proposition~\ref{pro:sep-path}. 
Since $a_i\parallel b_{j+1}$ in $P$ and 
$W_i$ is $y_0$-left of $W_j$, 
Proposition~\ref{pro:everything} implies that $a_i$ is right of $N$. 
Since $b_{i+1}\parallel a_{j}$ in $P$ and 
$U_{i+1}$ is $x_0$-left of $U_{j+1}$, 
Proposition~\ref{pro:everything} implies that $b_{i+1}$ is left of $N$. 
Since $a_i \leq b_{i+1}$ in $P$, Proposition~\ref{pro:force-comparability} forces 
either $a_i<b_{j+1}$ or $a_j<b_{i+1}$ in $P$. 
Both options are false, which completes the proof that $a_i$ is $y_0$-right of $a_j$. 
Now a symmetric argument shows that if $U_{i+1}$ is $x_0$-right of $U_{j+1}$ then 
$a_i$ is $y_0$-left of $a_j$. 
This completes the proof of item~\ref{pro:item-a-left-or-right}. 
The proof of~\ref{pro:item-b-left-or-right} is dual.

We continue with an argument for~\ref{pro:item-a-b-equivalence}. 
Note that the arguments given for~\ref{pro:item-a-left-or-right} and~\ref{pro:item-b-left-or-right} show that if $b_{i+1}$ is $x_0$-left of $b_{j+1}$, then $a_i$ is $y_0$-right of $a_j$.
Also if $b_{i+1}$ is $x_0$-right of $b_{j+1}$, then $a_i$ is $y_0$-left of $a_j$, as desired.
\end{proof}
The following special case of the preceding proposition is stated for emphasis.
\begin{proposition}\label{pro:2-cycle}
If $((a_1,b_1),(a_2,b_2))$ is a strict alternating cycle of incomparable pairs from $D_P(y_0)\times U_P(x_0)$, then 
$a_1$ is $y_0$-left of $a_2$ if and only if $b_1$ is $x_0$-left of $b_2$.
\end{proposition}

We define an auxiliary digraph $H$ whose vertex set is 
$\Inc(D_P(y_0),U_P(x_0))$. 
When $(a,b)$ and $(a',b')$ are vertices in $H$, 
we have a directed edge from $(a,b)$ to $(a',b')$ in $H$ when 
$((a,b),(a',b'))$ is an alternating cycle, and $a$ is $y_0$-left of $a'$ (therefore, 
$b$ is $x_0$-left of $b'$ by Proposition~\ref{pro:2-cycle}).

When $n\geq1$, a sequence $((a_1,b_1),\ldots,(a_n,b_n))$ of vertices 
from $H$ is a \emph{directed path} of length $n$ in $H$, if 
there is a directed edge in $H$ from $(a_i,b_i)$ to $(a_{i+1},b_{i+1})$ 
for all $i$ such that $1\leq i\leq n-1$. 
Note that when $(a,b)$ is a vertex in $H$, we consider $((a,b))$ as 
a directed path of length one.
Note further that $H$ is acyclic. 

The next proposition implies a notion of transitivity for directed paths in $H$,
and this concept will prove to be fundamentally important.

\begin{proposition}\label{pro:transitive}
Let $n\ge3$ and let $\left((a_1,b_1),\ldots,(a_n,b_n)\right)$ be a 
directed path in $H$.  Then $((a_i,b_i),(a_j,b_j))$ is an edge in 
$H$ for all $i,j$ with $1\le i<j\le n$.
In particular, these pairs form a copy of the standard example $S_n$.
\end{proposition}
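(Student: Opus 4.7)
The plan is to first establish the key case $n=3$, then extend to all $n$ by strong induction on the gap $j-i$.

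For $n=3$, given a directed path $(a_1,b_1)\to(a_2,b_2)\to(a_3,b_3)$ in $H_I$, the ordering conditions $a_1<_S a_3$ and $b_1<_T b_3$ follow by transitivity, so what remains is to show $a_1\leq_P b_3$ and $a_3\leq_P b_1$. For the second, I would assume $a_3\not\leq_P b_1$; since $a_3\in\Min(P)$ and $b_1\in\Max(P)$ this forces $a_3\parallel_P b_1$. I then consider the separating path $N=N(a_2,b_1)$, which is legal because $a_2\leq_P b_1$. Proposition~\ref{pro:force-side} locates $a_3$ on the left of $N$ (since $a_2<_S a_3$ and $a_3\parallel_P b_1$) and $b_2$ on the right of $N$ (since $b_1<_T b_2$ and $b_2\parallel_P a_2$). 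Since $a_3\leq_P b_2$, any witnessing path $W$ from $a_3$ to $b_2$ must meet $N$ in some point $q$; the structural properties of $N$ then force a contradiction in every case: if $q$ lies on the blue or black part, then $a_3\leq_P q\leq_P b_1$ contradicts $a_3\parallel_P b_1$, while if $q$ lies on the red or black part, then $a_2\leq_P q\leq_P b_2$ contradicts $a_2\parallel_P b_2$. A symmetric argument applied to $N(a_1,b_2)$ (with $a_2$ on the left since $a_1<_S a_2$ and $a_2\parallel_P b_2$, $b_3$ on the right since $b_2<_T b_3$ and $b_3\parallel_P a_1$, and a witnessing path from $a_2$ to $b_3$) yields $a_1\leq_P b_3$.

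For general $n\geq 3$, I proceed by strong induction on $j-i$. The base case $j-i=1$ is given by hypothesis. For $j-i\geq 2$, pick any $k$ with $i<k<j$; by the inductive hypothesis both $((a_i,b_i),(a_k,b_k))$ and $((a_k,b_k),(a_j,b_j))$ are edges in $H_I$, so the $n=3$ argument applied to the triple produces the desired edge $((a_i,b_i),(a_j,b_j))$. Finally, distinctness of the $a_i$ (and likewise the $b_j$) is immediate from the strict chain $a_1<_S\cdots<_S a_n$; together with the fact that every new edge yields $a_i\leq_P b_j$ for the relevant $i,j$, and with $A_I\cap B_I=\emptyset$ forcing $a_i\neq b_j$, we obtain $a_i<_P b_j$ for all $i\neq j$; combined with $a_i\parallel_P b_i$, this is exactly the defining relation of $S_n$.

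The main obstacle is the case analysis determining where the witnessing path crosses the separating path $N$; this is kept short because, by construction of $N(a,b)$, any point on the blue or black part is $\leq_P b$ and any point on the red or black part is $\geq_P a$, so each subcase collapses into a one-line comparison producing an immediate contradiction with either the assumed incomparability $a_3\parallel_P b_1$ (respectively $a_1\parallel_P b_3$) or the structural incomparability $a_2\parallel_P b_2$ from $(a_2,b_2)\in I$.
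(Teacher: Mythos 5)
Your proof is correct and follows essentially the same route as the paper: reduce to $n=3$, then assume a comparability fails, pick a suitable separating path $N$, use Proposition~\ref{pro:force-side} to place the relevant $a$ and $b$ on opposite sides of $N$, and derive a contradiction from the forced crossing. The only cosmetic difference is that you work with $N(a_2,b_1)$ (and $N(a_1,b_2)$) and re-derive the crossing argument by hand, whereas the paper uses $N(a_2,b_3)$ and cites Proposition~\ref{pro:force-comparability} directly; these are symmetric variants of the same argument.
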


\begin{proof}
Using induction, it is clear that the lemma holds in general if
it holds when $n=3$.
Now there are two statements that need to be proved: 
(1) there is an edge in $H$ from $(a_1,b_1)$ to $(a_3,b_3)$; 
(2) the sets $\set{a_1,a_2,a_3}$ and $\set{b_1,b_2,b_3}$ are disjoint. 
This will force a standard example of size $3$ on these six elements.

Now to prove statement (1), we observe first that
$a_1$ is $y_0$-left of $a_2$, and $a_2$ is $y_0$-left of $a_3$. 
Proposition~\ref{pro:y0-left-transitive} implies 
$a_1\parallel a_3$ in $P$, and 
$a_1$ is $y_0$-left of $a_3$. 
It suffices to show that $a_1\leq_P b_3$ and $a_3\leq_P b_1$.
We first show that $a_1\leq_Pb_3$. 

Let $S$ and $T$ be witnessing trees for $\set{a_1,a_2,a_3}$ and $\set{b_1,b_2,b_3}$, respectively. 
Let $N=N(u,v)$ be a separating path associated with $a_2\leq b_3$ in $P$ 
such that (1) $u$ is on $x_0Tb_3$ and $x_0Nu=x_0Tu$; and
(2) $v$ is on $a_2Sy_0$ and $vNy_0=vSy_0$. 
Since $b_2\parallel a_2$ and $b_2$ $x_0$-left of $b_3$, 
Proposition~\ref{pro:everything} implies $b_2$ is left of $N$. 
If $a_1$ is not right of $N$ (see the left part of Figure~\ref{fig:H-transitive}), 
then Proposition~\ref{pro:everything} implies 
$a_1\leq b_3$ in $P$. 
Accordingly, we may assume $a_1$ is right of $N$, 
see the right part of Figure~\ref{fig:H-transitive}.
Since $a_1\leq b_2$ and $a_1$ and $b_2$ are on opposite sides of $N$, 
Proposition~\ref{pro:force-comparability} forces 
either $a_1<b_3$ or $a_2<b_2$ in $P$. 
The second option is false, so we conclude that $a_1<b_3$ in $P$.
The argument for $a_3 \leq_P b_1$ is symmetric, and
this completes the proof of statement (1).

\begin{figure}[h]
\begin{center}
\includegraphics[scale=0.8]{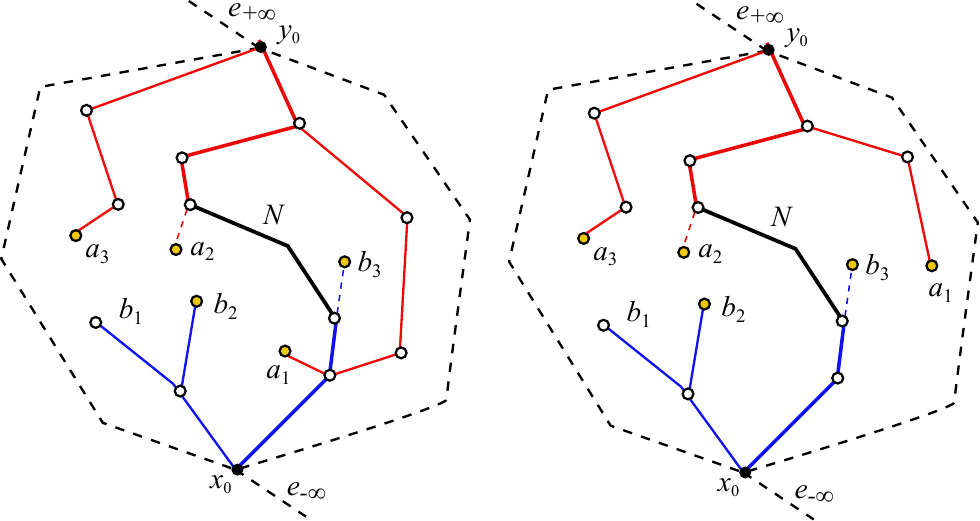}
\end{center}
\caption{Left: $a_1$ is left of $N$ and therefore $a_1\leq b_3$ in $P$. 
Right: $a_1$ is right of $N$ and $b_2$ is left of $N$. 
Thus, a witnessing path from $a_1$ to $b_2$ intersects $N$ and witnesses $a_1\leq b_3$ in $P$ as well.}
\label{fig:H-transitive}
\end{figure}

Now suppose there are integers $i,j\in[3]$ such that 
$a_i=b_j$. 
Let $k$ be the other integer in $[3]$. 
Then $a_k\leq b_j=a_i$ so $a_k$ is comparable to $a_i$, 
which is false. This proves statement (2).
\end{proof}

Let $\mathbb{F}=(P,G,x_0,y_0,I,\mathcal{D})$ be our fixed doubly exposed sequence.
Then let $\rho(\mathbb{F})$ be the maximum size (number of vertices) 
of a directed path in the auxiliary digraph $H$.
The proof of the following lemma is (essentially) the same as
the argument given for Lemma~5.9 in~\cite{ST14}, although 
we are working here in a more general setting.
\begin{lemma}\label{lem:nm}
If $\mathbb{F}=(P,G,x_0,y_0,I,\cgD)$ is doubly exposed, then
\[
\dim(I) \leq \rho(\mathbb{F})^2.
\]
In particular, for $d\geq3$, 
if $d$ is the largest size of a standard example in $P$, 
then $\dim(I)\leq d^2$.
\end{lemma}

\begin{proof}
We show $\dim(I)\leq \rho(\mathbb{F})^2$ by exhibiting a partition of $I$ 
into $\rho(\mathbb{F})^2$ reversible sets.
These sets will have the form $I(m,n)$ where $1\leq m,n \leq \rho(\mathbb{F})$.
A pair $(a,b)\in I$ belongs to $I(m,n)$ if
\begin{enumerate}
\item the longest directed path in $H$ starting from $(a,b)$ has size $m$, and
\item the longest directed path in $H$ ending at $(a,b)$ has size $n$.
\end{enumerate}
To complete the proof, it suffices to show that each $I(m,n)$ 
is reversible.  We argue by contradiction.

Suppose that for some pair $(m,n)$, the set $I(m,n)$ is not reversible. 
Therefore there is a strict alternating cycle 
$((a_1,b_1),\ldots,(a_k,b_k))$ of size $k\geq 2$ with all pairs from $I(m,n)$. 
Fix $S$ and $T$ to be witnessing trees for $\set{a_1,\ldots,a_k}$ and $\set{b_1,\ldots,b_k}$, respectively. 
Without loss of generality, $a_1<_S a_i$ for each $i\in\set{2,\ldots,k}$.

If $k=2$, then there is a directed edge from $(a_1,b_1)$ to $(a_2,b_2)$ in $H$.
It follows that any directed path in $H$ starting at $(a_2,b_2)$ 
can be extended by prepending $(a_1,b_1)$.  Thus $(a_1,b_1)$, $(a_2,b_2)$ 
cannot both belong to $I(m,n)$.
We conclude that $k\geq3$.

The balance of the proof divides into two cases:
\[
a_1<_S a_k <_S a_2 \qquad\text{or}\qquad a_1<_S a_2 <_S a_k.
\]

In the first case, we will show that 
there is a directed path in $H$ of size $m+1$ starting at $(a_1,b_1)$.
In the second case, we will show that
there is a directed path in $H$ of size $n+1$ ending at $(a_2,b_2)$.
Both implications are contradictions.
We will give details of the proof for the first case.  
It will be clear that the argument for the second case is symmetric.

\begin{figure}
\begin{center}
\includegraphics[scale=.8]{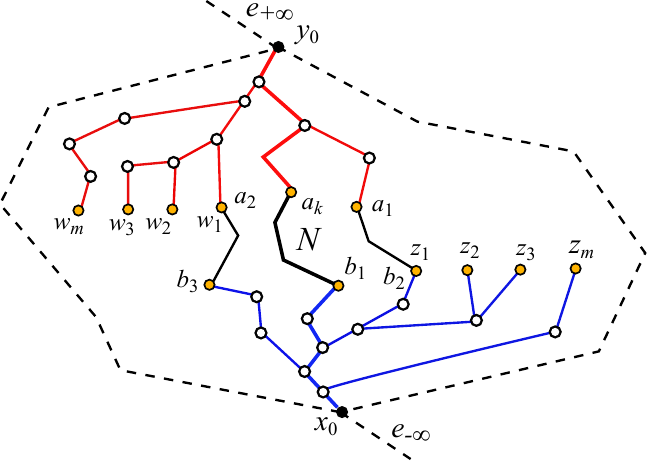}
\end{center}

\caption{The argument shows that $a_2$ is left of $N$ and
$z_2$ is right of $N$.
Therefore, a witnessing path associated with $a_2 \leq_P z_2$ in $P$ has to cross $N$, and this forces $a_k \leq_P z_2$ in $P$.}
\label{fig:cycle2}
\end{figure}

Therefore we assume $a_1<_S a_k <_S a_2$.
Since the pairs $(a_1,b_1)$, $(a_k,b_2)$ form an alternating 
cycle of size $2$ and $a_1 <_S a_k$, 
we have an edge in $H$ from $(a_1,b_1)$ to $(a_k,b_2)$.
Since $(a_1,b_1)$ is the first vertex on this edge, we know $m\geq2$.

Fix a directed path $((w_1,z_1),(w_2,z_2),\ldots,(w_{m},z_m))$ in $H$ 
with $(w_1,z_1) = (a_2,b_2)$. (Recall that $m\geq2$.)
Now consider the sequence
\[
((a_1,b_1),(a_k,b_2),(w_2,z_2),\ldots,(w_{m},z_m)).
\]
We claim that this sequence is a directed path in $H$.
Since it has size $m+1$ and it starts at $(a_1,b_1)$, 
this will be a contradiction.

We have already noted that $((a_1,b_1),(a_k,b_2))$ is an edge in $H$. 
Note that for all $i$ with $2\leq i< m$, $((w_i,z_i),(w_{i+1},z_{i+1}))$ is an edge in $H$ as well. 
It remains only to show that there is an edge from $(a_k,b_2)$ to 
$(w_2,z_2)$ in $H$. 
Since $a_k$ is $y_0$-left of $a_2$ and $a_2=w_1$ is $y_0$-left of $w_2$, 
Proposition~\ref{pro:y0-left-transitive} we know that $a_k$ is $y_0$-left of $w_2$. 
It remains only to show that $a_k\leq z_2$ and $w_2\leq b_2$ in $P$.
Note that 
$w_2 \leq_P z_1 = b_2$.  
Therefore, we only need to show that $a_k \leq_P z_2$.

Let $N=N(u,v)$ be a separating path for $a_k\leq b_1$ in $P$ such that 
(1) $u$ is on $x_0Tb_1$ and $x_0Nu=x_0Tu$; and 
(2) $v$ is on $a_kSy_0$ and $vNy_0=vSy_0$. 
See Figure~\ref{fig:cycle2}. 
Note that Proposition~\ref{pro:strict-alternating}, 
$b_1<_T b_2=z_1 <_T z_2$. 
If $z_2$ is not right of $N$, 
then 
Proposition~\ref{pro:everything} implies $a_k\leq z_2$ in $P$, as desired.
Therefore, we may assume that $z_2$ is right of $N$. 
Since $a_k<_S a_2$ and $a_2\parallel_P b_1$, 
Proposition~\ref{pro:everything} implies $a_2$ is left of $N$. 
Since $a_2=w_1\leq z_2$ in $P$, 
Proposition~\ref{pro:force-comparability} implies 
either $a_2<b_1$ or $a_k<z_2$ in $P$. 
Since the first option is false, 
we must have $a_k<z_2$ in $P$, as desired.
\end{proof}

When $I$ is doubly exposed, we now have $\dim(I)$ bounded in 
terms of $\rho(\mathbb{F})$, \emph{independent} of the height $h$ of~$P$.  
Now we turn our attention to bounding $\rho(\mathbb{F})$ in terms of~$h$.

\section{Restrictions Resulting from Bounded Height}\label{sec:rho-height}

This section is devoted to proving the following lemma.

\begin{lemma}\label{lem:36}
If the sequence $\mathbb{F}=(P,G,x_0,y_0,I,\mathcal{D})$ is doubly exposed, and $P$ is of height at most $h$, 
then
\[
\rho(\mathbb{F}) \leq 58h + 11.
\]
\end{lemma}

Once this lemma has been proven, the proof of our main theorem will be complete.
To see this, recall that using Corollary~\ref{cor:doubly-exposed}, 
we paid a price of $\Oh(h^4)$ to reduce to the case 
where we need to bound $\dim(I)$ for $I$ doubly exposed in $P$.
Lemma~\ref{lem:nm} asserts that $\dim(I) \leq \rho(\mathbb{F})^2$.
Combining this with~Lemma~\ref{lem:36}, we obtain the bound $\Oh(h^6)$.

Our final bound on $\rho(\mathbb{F})$ will emerge from
a series of preliminary results 
all working within the following context. 
We fix a sequence $\mathbb{F}=(P,G,x_0,y_0,I,\cgD)$ which is doubly exposed and let
$h$ be the height of $P$. 
We may assume that $h\geq2$.
We also let $H$ be the auxiliary digraph of $\mathbb{F}$.

The following elementary proposition will play a key role in subsequent
arguments.  There are actually two versions, 
one for $\set{a_1,\ldots,a_n}\subseteq D_P(y_0)$ and one for 
$\set{b_1,\ldots,b_n}\subseteq U_P(x_0)$.
The impact of the proposition
is illustrated in Figure~\ref{fig:no-skip}.

\begin{proposition}\label{pro:no-skip}
Let $((a_1,b_1),\ldots,(a_n,b_n))$ be a directed path in $H$,  
with $n\geq3$, and let $S$ be a witnessing tree for $\set{a_1,\ldots,a_n}$. 
If $1\le i< j< k \le n$ and $W$ is a witnessing path intersecting 
both $a_iSy_0$ and $a_kSy_0$, then 
$W$ intersects $a_jSy_0$.
\end{proposition}

\begin{figure}
\begin{center}
\includegraphics[scale=.8]{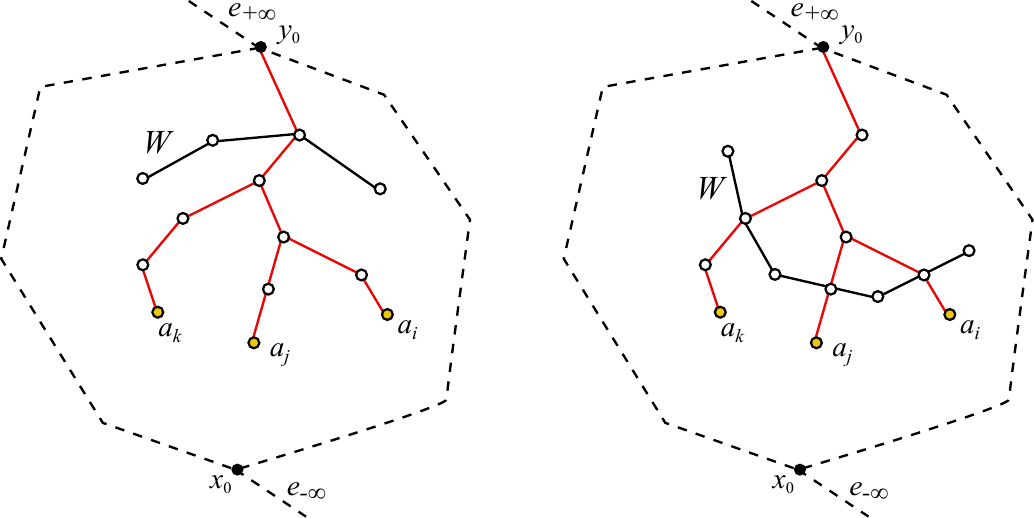}
\end{center}
\caption{A witnessing path $W$ intersects $a_iSy_0$ and $a_kSy_0$, 
therefore it has to intersect $a_jSy_0$.
}
\label{fig:no-skip}
\end{figure}

\begin{proof}
Since $S$ is a tree, if there is an element $w$ of $W$ 
common to $a_iSy_0$ and $a_kSy_0$, 
then $w$ is in $a_jSy_0$. 
Accordingly, we may assume that
(1) $W$ is a non-trivial path intersecting 
both $a_iSy_0$ and $a_kSy_0$ at distinct points 
$s_i$ and $s_k$, respectively; and 
(2) no proper subpath of $W$ intersects both 
$a_iSy_0$ and $a_kSy_0$.

In this case, the paths $W$ and $s_iSs_k$ form a cycle $D$ in $G$. 
If $W$ does not intersect $a_jSy_0$, then by planarity,  
$a_j$ is in the interior of $D$.
Since $W$ is a witnessing path, we have either $s_i < s_k$ or $s_k < s_i$ in $P$. In the first case, $a_i \leq s_i \leq d$ for all $d\in D$. 
This implies that $a_j$ is enclosed by $U_P(a_i)$, 
which is false by 
Proposition~\ref{pro:strict-alternating}. 
A symmetric argument shows that if $s_k<s_i$ in $P$, then 
$a_j$ is enclosed by $U_P(a_k)$. 
The contradiction completes the proof.
\end{proof}

Let $N$ be a separating path associated with a comparability $a\leq b$ in $P$. 
Let $A$ and $B$ be two subsets of elements of $P$.
We will say that $N$ \textit{separates} $A$ from $B$ if 
all points of $A$ are on one side of $N$ and all points of $B$
are on the other side.  

Let $((a_1,b_1),\ldots,(a_n,b_n))$ be a directed path in $H$.
For a non-empty subset $X\subseteq[n]$, we let $A(X)=\{a_i:i\in X\}$ and
$B(X)=\{b_i:i\in X\}$. 
Note that the pairs
in $\{(a_i,b_i):i\in X\}$ determine a directed path of  
size~$|X|$ in $H$. 

We present the first of three key results bounding $\rho(\mathbb{F})$ 
in terms of the height of~$P$.
\begin{proposition}\label{pro:separate}
Let $((a_1,b_1),\ldots,(a_n,b_n))$ be a directed path in $H$.
Let $\alpha$, $\beta$ be distinct integers from $[n]$ and let $N$ be a separating path associated with $a_{\alpha}<b_{\beta}$ in $P$. 
Let $X\subset[n]-\set{\alpha,\beta}$ such that 
either $i<\min(\alpha,\beta)$ for all $i\in X$, 
or $i>\max(\alpha,\beta)$ for all $i\in X$.
If $N$ separates $A(X)$ from $B(X)$, then $|X|\le 2h-1$.
\end{proposition}

\begin{proof}
We give the argument when 
$i<\min(\alpha,\beta)$ for all $i\in X$. 
The argument for the other case is symmetric.

Let $N=N(u,v)$. 
Fix a witnessing tree $S$ for $\set{a_1,,\ldots,a_n}$ such that 
$vNy_0$ is a terminal portion of $a_{\alpha}Sy_0$. 
Also,  
fix a witnessing tree $T$ for $\set{b_1,,\ldots,b_n}$ such that 
$x_0Nu$ is an initial portion of $x_0Tb_{\beta}$. 

Consider the red portion of $N$, 
a chain on at most $h$ elements. 
For each $a\in A(X)$, let $\sigma(a)$ be the lowest element of this chain 
such that $a <_P \sigma(a)$.
Consider also the blue portion of $N$. 
For each $b\in B(X)$, let $\tau(b)$ be the highest element of this chain
such that $\tau(b) <_P b$.

We claim that when $i,j\in X$ and $i<j$, 
then $\sigma(a_j) \leq \sigma(a_i)$ in $P$. 
Consider a witnessing path $W$ from $a_i$ to $\sigma(a_i)$.
The path $W$ intersects $a_iSy_0$ and $a_{\alpha}Sy_0$. 
Proposition~\ref{pro:no-skip} implies that $W$ also intersects $a_jSy_0$ 
(recall that $i<j<\alpha$ by our assumption). 
Therefore, $a_j\leq \sigma(a_i)$ in $P$ which implies $\sigma(a_j)\leq \sigma(a_i)$ in $P$. 
A dual argument shows that when 
$i,j\in X$ and $i<j$, we have $\tau(b_j)\geq \tau(b_i)$ in $P$.

When $i<j$, we claim that at least one of the two inequalities 
$\sigma(a_j) \leq_P \sigma(a_i)$, 
$\tau(b_j)\geq_P \tau(b_i)$ must be strict.
To see this, assume that $\sigma(a_i) = \sigma(a_j)$ and $\tau(b_i) = \tau(b_j)$.
Consider a witnessing path $W_{j,i}$ from $a_j$ to $b_i$.
Since $a_j$ and $b_i$ are on opposite sides of $N$, 
we know that $W_{j,i}$ intersects $N$.
Let $z$ be a common point of $W_{j,i}$ and $N$.
If $z$ is on the red portion of $N$, 
then $b_i > z \geq \sigma(a_j) = \sigma(a_i) 
\geq a_i$ in $P$ which is a contradiction.
If $z$ is on the blue portion of $N$, 
then $a_j < z \leq \tau(b_i) = \tau(b_j) 
\leq b_j$ in $P$ which is a contradiction.
Thus $z$ is in the black part of $N$.
Similarly, a witnessing path $W_{i,j}$ from $a_i$ to $b_j$ must intersect $N$ at a 
point $z'$ which is also in the black part of $N$. 
Since the black part is a chain, $z$ and $z'$ 
are comparable in $P$.  If $z \leq_P z'$, then $a_j \leq z \leq z' \leq b_j$ in $P$.
If $z' \geq_P z$, then $a_i \leq z' \leq z \leq b_i$ in $P$.
Both statements are false.
This observation confirms our claim.

Consider the following two sets $\set{\sigma(a) \mid a\in A(X)}$, 
$\set{\tau(b) \mid b\in B(X)}$.  
Each of these can be considered as a 
sequence sorted by the linear order on $X$ as a set of integers.
The first sequence is non-increasing on the red chain in $N$.
The second sequence is non-decreasing on the blue chain in $N$.
Now moving along elements in $X$ in their natural ordering, 
there are $|X|-1$ consecutive pairs. 
For each such pair, at least one of the two sequences changes. 
Therefore,
\[
|X|-1 \leq |\set{\sigma(a) \mid a\in A(X)}|-1 + |\set{\tau(b) \mid b\in B(X)}|-1 \leq 2(h-1),
\]
so $|X|\leq 2h-1$. 
With this observation, the proof is complete.
\end{proof}

\begin{proposition}\label{pro:disjoint-trees}
Let $((a_1,b_1),\ldots,(a_n,b_n))$ be a directed path in $H$.
If $S$ and $T$ are witnessing trees for $\set{a_1,\ldots,a_n}$ and 
$\set{b_1,\ldots,b_n}$, respectively, and 
$S\cap T$ is empty, then 
$n\leq 6h+1$.
\end{proposition}

\begin{proof}
We assume that $n\ge 6h+2$ and argue to a contradiction.
Let $N=N(u,v)$ be a separating path associated with the comparability $a_{4h}<b_{4h+1}$ such that the red part of $N$ is a terminal portion of $a_{4h}Sy_0$, 
and the blue portion of $N$ is an initial portion of $x_0Tb_{4h+1}$. 
Let $W$ be the black portion of $N$. 
We split the elements of the pairs into
$A_1=\{a_1,a_2,\dots,a_{4h-1}\}$, $A_2=\{a_{4h+2},a_{4h+3},\dots,a_{6h+2}\}$, 
$B_1=\{b_1,b_2,\dots,b_{4h-1}\}$ and $B_2=\{b_{4h+2},b_{4h+3},\dots,b_{6h+2}\}$.

Since $a_{4h+1}\parallel b_{4h+1}$ in $P$,
the path $W$ does not intersect $a_{4h+1}Sy_0$. 
Furthermore, by Proposition~\ref{pro:everything}, 
$a_{4h+1}$ is left of $N$.
Proposition~\ref{pro:no-skip}
implies that if $a\in A_2$, then 
$W$ does not intersect $aSy_0$. 
Since $S$ and $T$ are disjoint, 
Proposition~\ref{pro:everything} now implies
$a$ is left of $N$ as well.
Dually, 
$W$ does not intersect $x_0Tb_{4h}$ and $b_{4h}$ is left of $N$.
Now, Proposition~\ref{pro:no-skip}
implies that if $b\in B_1$, then 
$W$ does not intersect $x_0Tb$, 
and $b$ is left of $N$ as well.
On the other hand,
elements of $A_1\cup B_2$ may be on either side of $N$.

We partition the set $\{1,2,\dots,4h-1\}$
as $X_1\cup X_2$, where $i\in X_1$ if and only if
$a_i$ is left of $N$.  
Since $N$ separates $A(X_2)$ from $B(X_2)\subseteq B_1$, 
it follows from Proposition~\ref{pro:separate}
that $|X_2|\le 2h-1$.  Therefore $|X_1|\geq 4h-1 - (2h-1) = 2h$.
Similarly, we partition $\{4h+2,4h+3,\dots,6h+2\}$
as $Y_1\cup Y_2$, where $i\in Y_1$ if and only if
$b_i$ is left of $N$. 
Now Proposition~\ref{pro:separate} implies that $|Y_2|\le 2h-1$ and therefore $|Y_1|\geq 2h+1 - (2h-1) = 2$.
Set $m=2h$.
Now we are going to discard excess elements and relabel those that remain.
Let $A'$ be a subset of $A(X_1)$ of size $m$ with elements relabeled as
$\set{w_1,\ldots,w_m}$ so that $w_1<_S \cdots <_S w_m$.
Let $B'$ be the corresponding subset of elements of $B(X_1)$ with elements 
relabeled correspondingly as $\set{z_1,\ldots,z_m}$.
Let $\set{z_{m+1},z_{m+2}}$ be a subset of $B(Y_1)$ of size $2$ 
so that $z_{m+1}<_T z_{m+2}$.
Let $\set{w_{m+1},w_{m+2}}$ be the corresponding subset of elements of $A(Y_1)$.
Note that we have
\begin{align*}
w_1 <_S \cdots <_S w_m &<_S a_{4h} <_S a_{4h+1} <_S w_{m+1} <_S w_{m+2},\\
z_1 <_T \cdots <_T z_m &<_T b_{4h} <_T b_{4h+1} <_T z_{m+1} <_T z_{m+2}.
\end{align*}

Let $N'=N(u',v')$ be a separating path associated with $w_{m+1}<z_{m+2}$ 
such that the red part of $N'$ is a terminal portion of $w_{m+1}Sy_0$ and 
the blue part of $N'$ is an initial portion of $x_0Tz_{m+2}$.
Also, let $W'$ denote the black part of $N'$.
Since $z_{m+1}\parallel w_{m+1}$, 
$W$ does not intersect $x_0Tz_{m+1}$. 
Now Proposition~\ref{pro:everything} implies that $z_{m+1}$ is left of 
$N'$. 
Using Proposition~\ref{pro:no-skip}, it follows 
that if $b\in B'$, then $W'$ does not intersect $x_0Tb$, and $b$ is left of $N'$.  

\smallskip
\noindent
\textbf{Claim.}\ 
All elements of $A'$ are right of $N'$.
\begin{proof}
Consider an element $a\in A'$.
Since $a$ is left of $N$, $aSy_0$ contains a point from the union 
of the black and blue parts of $N$. 
Since $S\cap T$ is empty, $W$ intersects $aSy_0$. 
Let
$p$ be the largest point of $W$ that is also on $aSy_0$.
Dually, since $z_{m+2}$ is left of $N$, 
we know that $W$ intersects $x_0Tz_{m+2}$. 
Let $q$ be the least element of $W$ that is also 
on $x_0Tz_{m+2}$. 
Since $S$ and $T$ are disjoint and by planarity, 
we know $v\leq p< q \leq u$ in $P$.
Clearly, $q$ is the \emph{first} point of $x_0Tz_{m+2}$ 
that lies in $W$, and $p$ is the \emph{last} point of $aSy_0$ 
that lies in $W$.

Proposition~\ref{pro:no-skip} implies that there
is a point $r$ common to $qWu$ and $x_0Tz_{m+1}$.
In particular, we have $q \leq r \leq z_{m+1}$ in $P$.

Recall that $W'$ is a witnessing path from $v'$ to $u'$, 
and $u'$ is in $x_0Tz_{m+2}$.
If $u'$ is on $x_0Tq$ 
then
\[
w_{m+1} < u'\le q\le r \leq z_{m+1} \text{ in $P$},
\]
which is a contradiction. 
We conclude that 
$u'$ is an element of $x_0Tz_{m+2}$ that occurs \emph{after}
$q$. 
Let $e$ be the first edge of $qTz_{m+2}$ and 
let $e_0$ be the last edge of $x_0Tq$.
Now let $e'$ be the first edge $qWp$ (it does exist as $q\neq p$). 
See Figure~\ref{fig:pillar}.
We assert that $e$ is left of $e'$ in the $(q,e_0)$-ordering.
To verify this assertion, 
consider the path $N''=x_0TqWvSy_0$. 
Since $z_{m+2}$ is left of $N$, it is left of $N''$ as well. 
Now suppose to the contrary that 
$e'$ is left of $e$ in the $(q,e_0)$-ordering. 
Then $e$ is right of $N''$. 
Now Proposition~\ref{pro:self-evident} implies that 
there is a vertex $q'$, with $q'\neq q$ such that 
$q'$ is common to $N''$ and $qTz_{m+2}$. 
Since $T$ is a tree, $q'$ cannot be on $x_0N''q=x_0Tq$. 
Since $S$ and $T$ are disjoint, $q'$ cannot be on $vN''y_0=vSy_0$. 
Therefore, $q'$ is on $qN''v=qWv$.
This contradicts the choice of $q$ as the least element 
common to $W$ and $x_0Tz_{m+2}$.
This observation completes the proof of the assertion.

\begin{figure}
\begin{center}
\includegraphics[scale=.8]{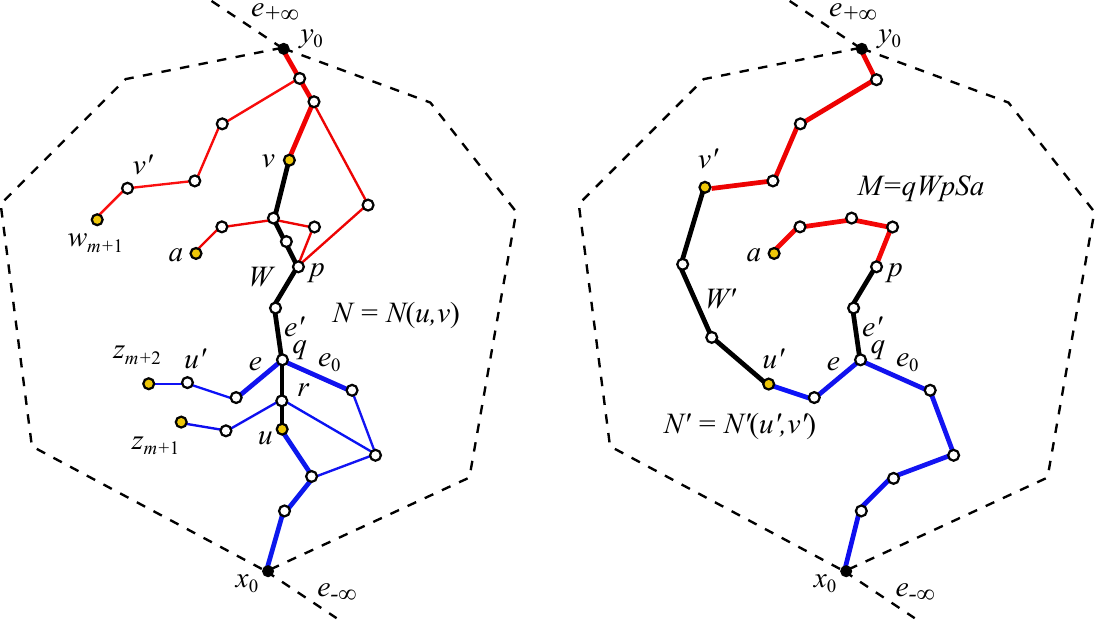}

\end{center}
\caption{The edge $e$ is left of $e'$ in the $(q,e_0)$-ordering.  
This makes $e'$ right of $N'$ and therefore $a$ is right of $N'$.}
\label{fig:pillar}
\end{figure}

Note that by the definition of $p$ and $q$, 
$M=qWpSa$ is a path.
Recall that $e$ in on $N'$ and $e'$ is on $M$.
Since $e$ is left of $e'$ in the $(q,e_0)$-ordering, 
Proposition~\ref{pro:self-evident} implies that $e'$ is right of $N'$.

To complete the proof, 
we observe that if $a$ is not right of $N'$, then 
Proposition~\ref{pro:self-evident} implies that 
there is a vertex $q'$, with $q'\neq q$, 
common to $M$ and $N'$.
If $q'$ is in the black or red part of $N'$, i.e.\ $u'N'y_0$, 
then $v'\leq q'$ in $P$ and
\[
w_{m+1} \leq v' \leq q' \leq q \leq r \leq z_{m+1}\ \text{in $P$,} 
\]
a contradiction.
If $q'$ is in the blue part of $N'$, i.e.\ $x_0Tu'$, then 
$q'$ cannot belong to $aMp=aSp$ as $S$ and $T$ are disjoint.
Finally, if $q'$ is in the blue part of $N'$ and 
$q'$ is in $pMq=pWq$, then we contradict the choice of $q$.
\end{proof}

We have now reached a
contradiction since we have shown that $N'$ separates $A'$ and $B'$ with
$|A'|=|B'|=m=2h$, contradicting Proposition~\ref{pro:separate}.
This completes the proof of Proposition~\ref{pro:disjoint-trees}.
\end{proof}

\subsection{Separating witnessing trees}

In Figure~\ref{fig:vertical}, we illustrate 
some of the challenges we face in finding disjoint 
witnessing trees.

\begin{figure}
\begin{center}
\includegraphics[scale=.7]{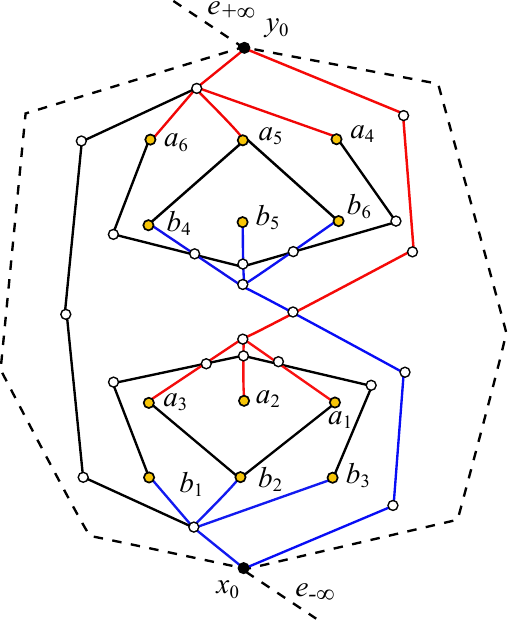}
\hspace{.2in}
\includegraphics[scale=.7]{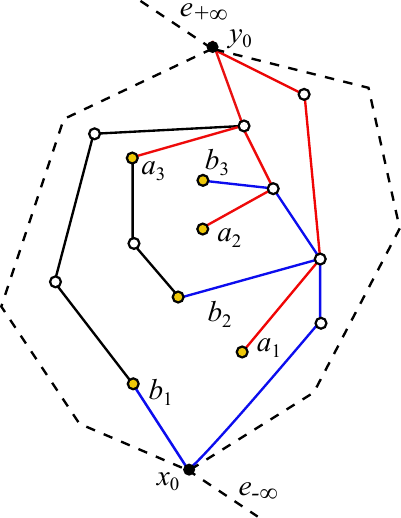}
\end{center}
\caption{
Left: Two copies of $S_3$ are stacked in a vertical manner. 
Clearly, this construction can be expanded for two copies of an 
arbitrarily large standard example. 
Right: Three incomparable pairs are stacked vertically to form 
a copy of $S_3$.}
\label{fig:vertical}
\end{figure}

Now we begin the material to address this challenge. 
Let $Z$ be the non-empty subposet of $P$
consisting of all elements of $P$ that belong to a witnessing path from
$x_0$ to $y_0$.  If we restrict our drawing of $G$ to
the induced subgraph determined by the elements of $Z$, we obtain
a drawing without edge crossings of the cover graph of $Z$.
Furthermore, $x_0$ is the unique minimal element of $Z$,
and $y_0$ is the unique maximal element of $Z$, and 
the induced subgraph of $G$ determined by the elements of $Z$ 
is the cover graph of the subposet $Z$. 

Our fixed drawing of the cover graph of $Z$ splits the plane into regions: 
some number of bounded regions and one unbounded. 
We call such a bounded region a $Z$-\emph{face}.
Each element of $P$ that is not in $Z$ is in the interior of one of the regions.
After adding two dummy elements $z'$, $z''$ into $P$ (and $Z$) such that
(1) $x_0 < z' < y_0$, $x_0 < z'' < y_0$ in $P$ and all these relations are covers; 
(2) $x_0z'$ is the leftmost edge in the $(x_0,e_{-\infty})$-ordering; 
(3) $x_0z''$ is the rightmost edge in the $(x_0,e_{-\infty})$-ordering; 
we can assume that any element of $P$ that is not in $Z$ is in the interior
of one of the (bounded) $Z$-faces.  

Let $S(\textrm{left},\textrm{not-left})$ 
consist of all pairs $(a,b)\in \Inc(D_P(y_0),U_P(x_0))$ for which 
there exists a witnessing path from $x_0$ to $y_0$ such that 
$a$ is left of $W$ and $b$ is not left of $W$.
Similarly, 
let $S(\textrm{not-left},\textrm{left})$ consist of all pairs 
$(a,b)\in \Inc(D_P(y_0),U_P(x_0))$ for which 
there exists a witnessing path from $x_0$ to $y_0$ such that 
$a$ is not left of $W$ and $b$ is left of $W$.
The other two sets
$S(\textrm{right},\textrm{not-right})$ and
$S(\textrm{not-right},\textrm{right})$ are defined in a symmetric manner.

\begin{proposition}
\label{pro:michal}

Each of the sets 
$S(\textrm{left},\textrm{not-left})$, 
$S(\textrm{right},\textrm{not-right})$, 
$S(\textrm{not-left},\textrm{left})$, 
$S(\textrm{not-right},\textrm{right})$ 
is reversible.
\end{proposition}

\begin{proof}
Using an argument by contradiction, 
we show that $S(\textrm{left},\textrm{not-left})$ is reversible. 
The argument for the other three sets is symmetric.
Let $((a_1,b_1),\ldots,(a_k,b_k))$ be a strict alternating cycle in $S(\textrm{left},\textrm{not-left})$.
Let $i\in[k]$. 
Fix a witnessing $W$ from $x_0$ to $y_0$ such that 
$a_i$ is left of $W$ and $b_i$ is not left of $W$. 
If $b_i$ is on $W$ then let $W_i=x_0Wb_i$. 
Otherwise,
let $W_{i}$ be a witnessing path from $x_0$ to $b_i$ that is 
$x_0$-consistent with $W$, so we have that 
$W_i$ is $x_0$-right of $W$.

If $b_{i+1}$ is left of $W$, then 
let $W_{i+1}$ be a witnessing path from $x_0$ to $b_{i+1}$ that is 
$x_0$-consistent with $W$. 
Clearly, $W_{i+1}$ is $x_0$-left of $W$.
Since $b_{i+1}\parallel b_i$ in $P$ we conclude that
$W_{i+1}$ and $W_i$ are $x_0$-consistent and 
$W_{i+1}$ is $x_0$-left of $W_i$.
Thus in this case, Proposition~\ref{pro:strict-alternating} implies that 
$b_{i+1}$ is $x_0$-left of $b_i$.

Now consider the case when $b_{i+1}$ is not left of $W$, and 
let $W'$ be a witnessing path from $a_i$ to $b_{i+1}$. 
Since $a_i$ is left of $W$ and $b_{i+1}$ is not left $W$, 
we conclude that 
$W'$ and $W$ must intersect, say at element $z$. 
Note that $z$ is not in $W_i$, 
as this would imply $a_i < z < b_i$ in $P$, which is false. 
Let $W_{i+1}=x_0WzW'b_{i+1}$. 
Then $W_{i+1}$ and $W_i$ are $x_0$-consistent and $W_{i+1}$ is 
$x_0$-left of $W_i$. 
Again, we conclude by Proposition~\ref{pro:strict-alternating} that 
$b_{i+1}$ is $x_0$-left of $b_i$.

We have now shown that $b_{i+1}$ is $x_0$-left of $b_i$. 
Clearly, this statement cannot hold for all $i\in[k]$. 
The contradiction completes the proof.
\end{proof}

Let $J$ be the subset of pairs in $\Inc(D_P(y_0),U_P(x_0))$ that 
do not belong to any of 
$S(\textrm{left},\textrm{not-left})$, 
$S(\textrm{not-left},\textrm{left})$,  
$S(\textrm{right},\textrm{not-right})$, 
$S(\textrm{not-right},\textrm{right})$.
Note that when $(a,b)\in J$ then neither $a$ nor $b$ belongs to $Z$.
Proposition~\ref{pro:michal} implies
\[
\dim(I) \leq \dim(\Inc(D_P(y_0),U_P(x_0))) \leq \dim(J) +4.
\]

Each $Z$-face $\cgF$ is bounded by two distinct witnessing paths
that have only their starting and ending points in common.
We let $x_\cgF$ denote the common starting point,
and we let $y_\cgF$ denote the common ending point of these two witnessing paths.
When we start at $x_\cgF$ and traverse
the boundary of $\cgF$ in a clockwise manner, we follow
the \emph{left side} of $\cgF$ until we reach $y_\cgF$. 
Then we traverse the \emph{right side} of $\cgF$
backwards until we arrive back at $x_\cgF$.
An element on the left (right) side of $\cgF$ that is not in $\set{x_\cgF,y_\cgF}$ is said to be \emph{strictly} on the left (right) side of $\cgF$. 
Note that if $u$ is strictly on the left side of $\cgF$, and 
$v$ is strictly on the right side of $\cgF$, then 
$u\parallel v$ in $P$. 
Note that $u$ is $x_0$-left of $v$ and $y_0$-right of $v$.
Note also that there is always at least one point strictly on the left (right) side of $\cgF$. 

When $\cgF$ is a $Z$-face, no element $u$ of $P$ that is in the 
interior of $\cgF$ satisfies $x_\cgF<_P u<_P y_\cgF$; otherwise this region 
would be split into smaller $Z$-faces. Also, a $Z$-face has no chords.

When $u\in P$ and $u$ is not in $Z$, there is a unique $Z$-face
$\cgF_u$ containing $u$ in its interior.  We let $y_u=y_{\cgF_u}$ and
$x_u=x_{\cgF_u}$.   
Let $(a,b)\in J$. 
A witnessing path from $a$ to $y_0$ has to leave the interior of $\cgF_a$,
and this implies $a < y_a$ in $P$. 
Dually, a witnessing path from $b$ to $x_0$ (going backward) has to 
leave the interior of $\cgF_b$, and this implies 
$x_b < b$ in $P$.

A pair $(a,b)\in J$ is called  a \textit{same-face pair}
if $\cgF_a=\cgF_b$.

\begin{proposition}
\label{pro:same-face}
All pairs in $J$ are same-face pairs.
\end{proposition}

\begin{figure}
\begin{center}
\includegraphics[scale=.8]{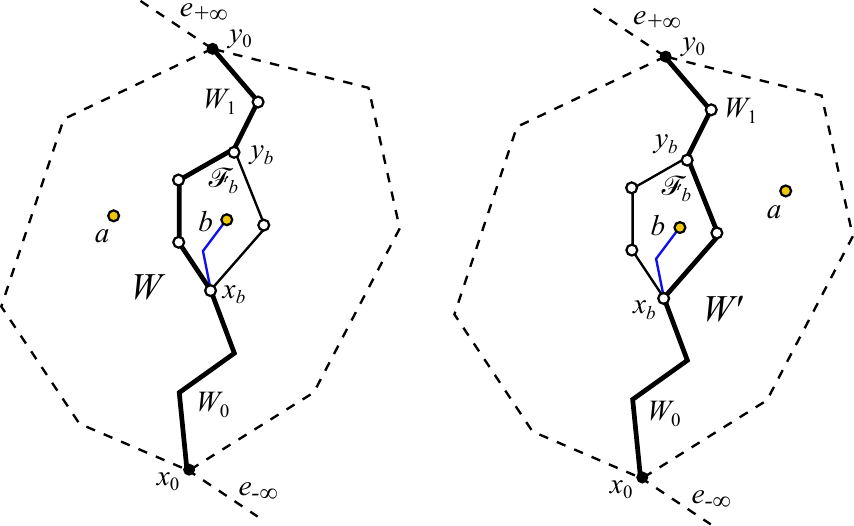}

\end{center}
\caption{An incomparable pair $(a,b)$ with $a$ and $b$ 
in different $Z$-faces. 
Left: $a$ is left of $W$ and $b$ is right of $W$. 
Right: $a$ is right of $W'$ and $b$ is left of $W'$.}
\label{fig:diff-face}
\end{figure}

\begin{proof}
To the contrary, suppose $(a,b)\in J$ and 
$\cgF_a\neq \cgF_b$. 
Let $W_0$ be a witnessing path from $x_0$ to $x_b$, and 
let $W_1$ be a witnessing path from $y_b$ to $y_0$. 
Now let $W$ be the witnessing path from $x_0$ to $y_0$ formed by 
concatenating $W_0$, the left side of $\cgF_b$, and $W_1$. 
Also let $W'$ be the witnessing path from $x_0$ to $y_0$ formed by 
concatenating $W_0$, the right side of $\cgF_b$, and $W_1$. 
See Figure~\ref{fig:diff-face}.

The elements in the interior of $\cgF_b$ are the only points in the plane 
that are right of $W$ and left of $W'$. 
It follows that either (1) $a$ is left of $W$ and $b$ is right of $W$; 
or (2) $a$ is right of $W'$ and $b$ is left of $W'$. 
If (1) holds then $(a,b)\in S(\textrm{left},\textrm{not-left})$, and 
if (2) holds then $(a,b)\in S(\textrm{right},\textrm{not-right})$.
\end{proof}

Here is another self-evident proposition stated for emphasis.

\begin{proposition}\label{pro:z-face}
If $W$ is a witnessing path from $x_0$ to $y_0$ in $Z$ 
and $\cgF$ is a $Z$-face, then 
there do not exist points $u$, $v$ on the boundary of $\cgF$ such that 
$u$ is left of $W$ and $v$ is right of $W$.
\end{proposition}

\begin{proposition}
\label{pro:zero}
Let $u$, $v$ be incomparable elements of $Z$.
\begin{enumerate}
  \item Then either $u$ is $x_0$-left of $v$, 
  or $u$ is $x_0$-right of $v$.
  \item If $W$ is a witnessing path from $x_0$ to $y_0$ 
with $v\in W$.
Then, $u$ is $x_0$-left of $v$ if and only if 
$u$ is left of $W$. 
\end{enumerate}
\end{proposition}
\begin{proof}
Note that $((u,v),(v,u))$ is 
a strict alternating cycle of elements in $Z$. 
Proposition~\ref{pro:strict-alternating} implies that 
either $u$ is $x_0$-left of $v$ or $u$ is $x_0$-right of $v$. 

Let $W$ be a witnessing path from $x_0$ to $y_0$ 
with $v\in W$. 
Suppose $u$ is $x_0$-left of $v$ and let $W_u$ be 
a witnessing path from $x_0$ to $u$ such that 
the pair $(W_u,x_0Wv)$ is $x_0$-consistent. 
Proposition~\ref{pro:self-evident} implies that 
$u$ is left of $W$.

Conversely, suppose that $u$ is left of $W$, and let 
$z$ be the largest element of $W$ such that $z < u$ in $P$. 
Let $U$ be a witnessing path from $z$ to $u$. 
Then $x_0WzUu$ and $x_0Wv$ are $x_0$-consistent. 
Again by Proposition~\ref{pro:self-evident}, the first path is $x_0$-left of the second. 
This shows $u$ is $x_0$-left of $v$.
\end{proof}

\begin{proposition}\label{pro:J}
For every strict alternating cycle $((a_1,b_1),\dots,(a_k,b_k))$ 
of pairs in $J$, there is a $Z$-face $\cgF$ such that 
all elements $a_1,\ldots, a_k, b_1,\ldots,b_k$ are in the interior 
of $\cgF$.
\end{proposition}

\begin{proof}
We assume to the contrary that $((a_1,b_1),\dots,(a_k,b_k))$
is a strict alternating cycle of pairs from $J$, 
and there is
no $Z$-face that contains all elements of the cycle in its
interior.  Of all such cycles, we assume further that $k$ is minimum.  

\noindent
\textbf{Claim 1.}\quad
There do not exist distinct integers $i,j\in[k]$ such that
the pairs $(a_i,b_i)$ and $(a_j,b_j)$ are in the
same $Z$-face.

\begin{proof}
Suppose that for some $i\neq j$ all four elements involved in 
$(a_i,b_i)$, $(a_j,b_j)$ lie in the same $Z$-face.
Since our alternating cycle is a counterexample, we do not have all 
the pairs lying in the same $Z$-face, so we know that $k\ge3$.  After a relabeling, 
we may assume that $j=k$ and $2\le i\le k-1$.  However, this implies that
\[
((a_1,b_1),\dots,(a_{i-1},b_{i-1}),(a_k,b_i))
\]
is an alternating cycle of same-face pairs from $J$.
This is
a contradiction unless all the pairs on this cycle belong the same
$Z$-face.  In this case, we consider
the strict alternating cycle
\[
((a_i,b_1), (a_{i+1},b_{i+1}),\dots,(a_k,b_k)).
\]
Again, we have a strict alternating cycle 
of pairs from $J$. 
However, now it is
clear that not all the pairs on this cycle belong to the same
$Z$-face.  Furthermore, the length of this cycle is less than
$k$.  The contradiction completes the proof of the claim.
\end{proof}
 
For each $i\in[k]$, let $\cgF_i$ be the common $Z$-face $\cgF_{a_i}=
\cgF_{b_i}$, let $x_i=x_{b_i}$, and let $y_i=y_{a_i}$.
Let $W_i$ be a witnessing path from $a_i$ to $b_{i+1}$.  Then
let $u_i$ be the lowest point of $W_i$ that is on the
boundary of $\cgF_i$, and let $v_{i+1}$ be the highest point of $W_i$
that is on the boundary of $\cgF_{i+1}$. 
We note that $a_i<_P u_i\le_P v_{i+1}<_P b_{i+1}$.

\smallskip
\noindent
\textbf{Claim 2.}\quad
For all $i,j\in[k]$, $u_i\le_P v_j$ if and only if $j=i+1$ (cyclically).

\begin{proof}
We already know that $u_i\le_P v_{i+1}$ for all $i\in[k]$.  Now
suppose $j\neq i+1$ and $u_i\le v_j$.  Then $a_i<_P u_i\le_P v_j<b_j$.
This implies $a_i<b_j$.  Now we have contradicted the assumption that
our original cycle is strict.  With this observation, the proof
of the claim is complete.
\end{proof}

Claim 2 implies that $((u_1,v_1),\dots,(u_k,v_k))$ is a strict alternating
cycle of incomparable pairs in $Z$.
Let $i\in[k]$.  Since $u_i\parallel_P v_i$, and both
$u_i$ and $v_i$ are on the boundary of $\cgF_i$, it implies that they
are on opposite sides of $\cgF_i$.  Also, $\{u_i,v_i\}\cap
\{x_i,y_i\}=\emptyset$.  

\smallskip
\noindent
\textbf{Claim 3.}\quad
For each $i\in[k]$, the following statements hold.
\begin{enumerate}
\item If $u_i$ is $x_0$-left of $v_i$, 
then $u_{i+1}$ is $x_0$-left of $v_{i+1}$ and 
$u_{i+1}$ is $x_0$-left of $u_i$.
\item If $u_i$ is $x_0$-right of $v_i$, 
then $u_{i+1}$ is $x_0$-right of $v_{i+1}$ and 
$u_{i+1}$ is $x_0$-right of $u_i$.
\end{enumerate}

\begin{figure}
\begin{center}
\includegraphics[scale=.8]{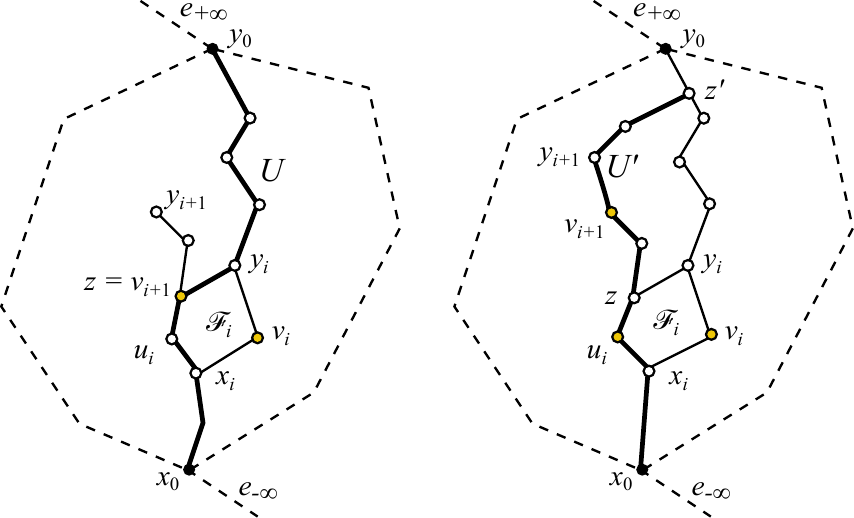}

\end{center}
\caption{
We assume $u_i$ is $x_0$-left of $v_i$. 
Left: $v_{i+1}=z$, an element strictly on the left side of $\cgF_i$.
Right: $z<v_{i+1}$ in $P$, so 
$v_{i+1}$ is left of $U$.  
}
\label{fig:uivi-consistent}
\end{figure}

\begin{proof}
We prove the first statement.  The proof
of the second is symmetric.  Let $i\in[k]$. 
Suppose that $u_i$ is $x_0$-left of $v_i$.
This implies that $u_i$ is strictly on the left side of $\cgF_i$ 
and $v_i$ is strictly on the right side of $\cgF_i$. 

Let $W_0$ be a witnessing path from $x_0$ to $x_i$ and 
let $W_1$ be a witnessing path from $y_i$ to $y_0$.
Let $U$ be the concatenation of $W_0$, 
the left side of $\cgF_i$, and $W_1$.

Recall that $u_i\leq v_{i+1}$ in $P$.
Let $z$ be the largest element on $U$ such that 
$z\leq v_{i+1}$ in $P$.
Since $v_{i+1}\parallel v_i$ in $P$, 
we must have $z$ in $u_iUy_i$ but excluding $y_i$, 
i.e., $z$ is strictly on the left side of $\cgF_i$. 
We split the proof into two cases: 
$z=v_{i+1}$ and $z<v_{i+1}$ in $P$.

As illustrated on the left side of Figure~\ref{fig:uivi-consistent}, 
we first consider the case that $z=v_{i+1}$.
Since $\cgF_i\neq \cgF_{i+1}$, 
$v_{i+1}$ must be strictly on the right side of $\cgF_{i+1}$. 
Therefore, $u_{i+1}$ must be strictly on the left side of $\cgF_{i+1}$, 
$u_{i+1}$ is $x_0$-left of $v_{i+1}$, and 
$u_{i+1}$ is left of $U$.
Proposition~\ref{pro:zero} implies that
$u_{i+1}$ is $x_0$-left of $u_{i}$, as desired.

Now we consider the case that $z<v_{i+1}$ in $P$, 
as illustrated on the right side of Figure~\ref{fig:uivi-consistent}.
Let $W$ be a witnessing path from $z$ to $v_{i+1}$. 
By the definition of $z$, $W$ is disjoint from $U$ except 
the element $z$. 
Note also that the first edge of $W$ is left of $U$, 
as no edge of a witnessing path between two elements in $Z$ 
is in the interior of a $Z$-face.
Proposition~\ref{pro:self-evident} implies that 
all edges and vertices of $W$ except $z$ are left of $U$. 
Let $z'$ be the least element on $U$ such that 
$v_{i+1}<z'$ in $P$. 
Let $W'$ be a witnessing path from $v_{i+1}$ to $z'$.

Now let $U'=x_0UzWv_{i+1}W'z'Uy_0$. 
We claim that $u_{i+1}$ is left of $U'$. 
Clearly, since $u_{i+1}\parallel v_{i+1}$ in $P$, 
$u_{i+1}$ is not on $U'$. 
Suppose to the contrary that $u_{i+1}$ is right of $U'$. 
Recall that $v_{i+1}$ is left of $U$. 
Proposition~\ref{pro:z-face} implies that $u_{i+1}$ is not right of $U$. 
Since $u_{i+1}\parallel u_{i}$ in $P$, 
we conclude $u_{i+1}$ is left of $U$.
Note that all points in the plane that are right of $U'$ and left of $U$ are in the interior of the cycle $zWv_{i+1}W'z'Uz$. 
However, all elements of the cycle are in $U_P(u_i)$. 
This means that $u_{i+1}$ is enclosed by the $U_P(u_i)$, 
a contradiction.
This proves that $u_{i+1}$ is left $U'$.
Now, Proposition~\ref{pro:zero} implies that 
$u_{i+1}$ is $x_0$-left of $v_{i+1}$, and 
$u_{i+1}$ is $x_0$-left of $u_{i}$, as desired.
\end{proof}

To complete the proof of the lemma, 
we simply note that the statement of the claim cannot hold for all $i\in[k]$ cyclically.
\end{proof}

Let $\cgF$ be a $Z$-face. 
We define $P_{\cgF}$ as the subposet of $P$ containing 
all elements inside or on the boundary of $\cgF$.
Note that the cover graph $G_\cgF$ of $P_\cgF$ is 
the induced subgraph of $G$ on elements of $P_\cgF$. 
Let $J_\cgF$ consist of those pairs $(a,b)\in J$ such
that $\cgF=\cgF_a=\cgF_b$. 
Finally, let $\cgD_\cgF$ be the restriction of the plane drawing $\cgD$ to vertices and edges $G_\cgF$.
Observe that $\mathbb{F}_\cgF=(P_\cgF,G_\cgF,x_{\cgF},y_{\cgF},J_\cgF,\cgD_\cgF)$ is doubly exposed.
Then, Propositions~\ref{pro:michal}, \ref{pro:same-face}, \ref{pro:J} imply
\[
\rho(\mathbb{F}) \leq 4+\max_{\cgF}\ \rho(\mathbb{F}_{\cgF}).
\]

\begin{proposition}\label{pro:killer}
Let $\mathbb{F}=(P,G,x_0,y_0,I,\cgD)$ be doubly exposed, 
where $P$ is a poset of height $h$. 
Suppose further that:
\begin{enumerate}
\item The boundary of the exterior face of $G$ in $\cgD$ 
is the boundary of a $Z$-face $\cgF$.
\item If $(a,b)\in I$, 
then both $a$ and $b$ are in the interior of $\cgF$.
\end{enumerate}
Then
\[\rho(\mathbb{F})\leq 58h+7.\]
\end{proposition}

\begin{proof}
Suppose to the contrary that $((a_1,b_1),\dots,(a_n,b_n))$ 
is a directed
path in the auxiliary graph $H$ of $\mathbb{F}$ 
with $n\geq 58h+8$. 
Let $S$ and $T$ be witnessing trees for $\set{a_1,\ldots,a_n}$ and 
$\set{b_1,\ldots,b_n}$, respectively. 
Suppose that $(\alpha,\beta)$ is a pair of distinct integers in $[n]$
such that 
$x_0Tb_\beta$ intersects $y_0Sa_\alpha$. 
Let $c=c(a_\alpha,b_\beta)$ and $d=d(a_\alpha,b_\beta)$ be,
respectively, the least and the greatest element of $P$ common to
$x_0Tb_\beta$ and $y_0Sa_\alpha$. 
Then $x_0< c\le d< y_0$ in $P$. 
Since the only witnessing paths from $x_0$ to $y_0$ are
the two sides of $\cgF$, 
it follows that $W=x_0TcTdSy_0=x_0TcSdSy_0$ is one of the sides of $\cgF$.

If $\alpha<\beta$, we assert (as illustrated on the left side of Figure~\ref{fig:killer}) that the following three statements hold:
\begin{enumerate}
\item
$W$ is the right side of $\cgF$.
\item 
If $1\leq i< \alpha$, then $cS y_0$ is a terminal
portion of $a_{i}Sy_0$. Furthermore,
all edges and vertices of $a_iSc$, except $c$, 
are in the interior of $\cgF$.
\item
If $\beta<j\le n$, then $x_0Td$ is an initial portion
of $x_0Tb_{j}$. 
Furthermore,
all edges and vertices of $dTb_j$, except $d$, 
are in the interior of $\cgF$.
\end{enumerate}

To verify this assertion, suppose first that 
$\alpha<\beta$ and $W$ is the left side of $\cgF$. 
Recall that $a_{\alpha}$ is $y_0$-left of $a_{\beta}$. 
Since $c\in a_{\alpha}Sy_0$ and $c$ is on the left side of $\cgF$, 
it follows that $c\in a_{\beta}Sy_0$. 
This forces $a_{\beta}<c<b_{\beta}$ in $P$, which is false.
The contradiction shows that $W$ is on the right side of $\cgF$. 

If $1\leq i<\alpha$, then $a_i$ is $y_0$-left of $a_{\alpha}$. 
Since $c\in a_{\alpha}Sy_0$ and $c$ is on the right side of $\cgF$, 
it follows that $c\in a_{i}Sy_0$. 
Therefore $cSy_0$ is a terminal portion of $a_iSy_0$.
The definition of $c$ implies that the first edge of $cSa_i$ is 
in the interior of $\cgF$. 
Since $\cgF$ is a $Z$-face, all edges and vertices of $cSa_i$, 
except $c$, are in the interior of $\cgF$.
A symmetric argument shows that if $\beta<j\leq n$, 
then $x_0Td$ is an initial portion of $x_0Tb_j$, and 
all edges and vertices of $dTb_j$, except $d$, 
are in the interior of $\cgF$.

Symmetrically, if $\alpha>\beta$, 
the following three statements hold:
\begin{enumerate}
\item
$W$ is the left side of $\cgF$.
\item 
If $\alpha<i\leq n$, then $cS y_0$ is a terminal
portion of $a_{i}Sy_0$. 
Furthermore,
all edges and vertices of $a_iSc$, except $c$, 
are in the interior of $\cgF$.
\item
If $1\leq j<\beta$, then $x_0Td$ is an initial portion
of $x_0Tb_{j}$. 
Furthermore,
all edges and vertices of $dTb_j$, except $d$, 
are in the interior of $\cgF$.
\end{enumerate}

\begin{figure}
\begin{center}
\includegraphics[scale = .8]{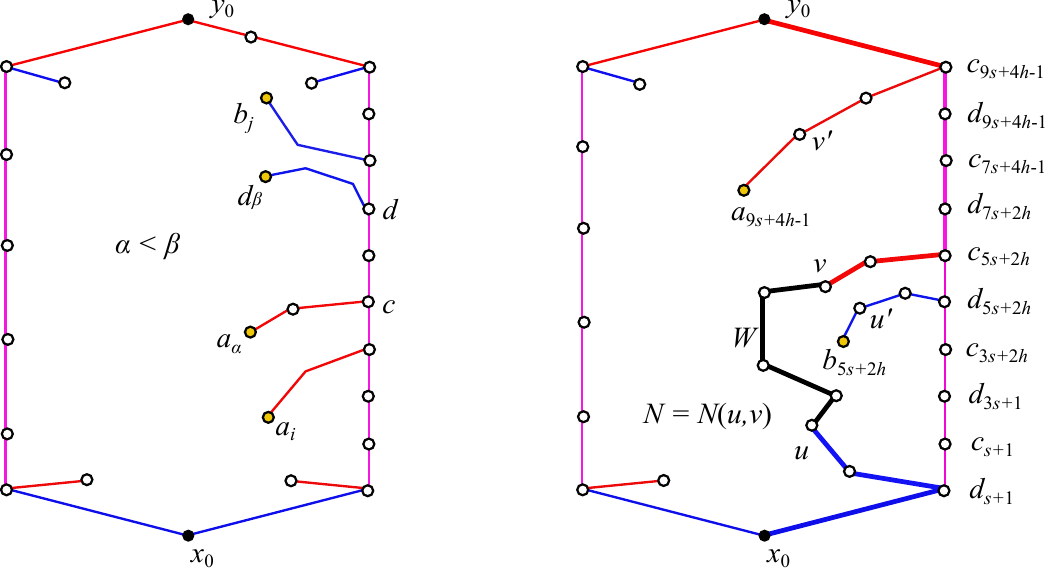}
\end{center}
\caption{
Left: When $\alpha<\beta$, 
the path common to $a_{\alpha}Sy_0$ and $x_0Tb_{\beta}$ is a portion 
of the right side of $\cgF$.
Right: $v'$ is left of $N$, while $u'$ is right of $N$.
}
\label{fig:killer}
\end{figure}

Let $s=6h+1$. 
Note that $n\geq 9s+4h-1$.
If $X$ is any subset of $[n]$ with $|X|=s+1$, then
Proposition~\ref{pro:disjoint-trees} implies that there
are distinct integers $\alpha,\beta\in X$ such that
$x_0Tb_\beta$ intersects $a_\alpha Sy_0$.  We apply this observation to
the set $X=[s+1]$.  We give the balance of the argument under
the assumption that $\alpha<\beta$.  From the details of the argument (illustrated on the right side of Figure~\ref{fig:killer}),
it will be clear that the proof when $\alpha>\beta$ is symmetric.

For all $i\in[n]$, let $c_i$ be the least element of $P$ common to $a_iSy_0$ and the right side of $\cgF$.
Dually, for all $j\in[n]$, let $d_j$ be the greatest element of $P$ common to $x_0Tb_j$ and the right side of $\cgF$.
Since $a_1<_S a_2 <_S \cdots <_S a_n$, we have 
\[
c_1 \leq c_2 \leq \cdots \leq c_n\ \text{ in $P$.}
\] 
Symmetrically, since $b_1<_T b_2 <_T \cdots <_T b_n$, we have 
\[
d_1 \leq d_2 \leq \cdots \leq d_n\ \text{in $P$.}
\]
Note also that since $a_i\parallel b_i$ in $P$, we must have
\begin{equation}
d_i<c_i\ \text{in $P$, for all $i\in[n]$.}\label{eq:universal}
\end{equation}

From our statements above and our assumption that $\alpha<\beta$, 
we conclude $x_0 < c_{\alpha} \leq d_{\beta} < y_0$ in $P$. 
Also, both $c_{\alpha}$ and $d_{\beta}$ are strictly on the right side of $\cgF$.
Again from the statements above we have 
$d_j$ strictly on the right side of $\cgF$ for all $j\geq\beta$. 
Therefore, the only intersection of 
$x_0Tb_j$ with the left side of $\cgF$ is at $x_0$, 
for all $j\geq \beta$.

\smallskip
\noindent
\textbf{Claim 1.}\quad 
If $i\in\set{s+1,\ldots,n-2s}$, then $c_i < d_{i+2s}$ in $P$.

\begin{proof}
Consider $X'=\set{i,\ldots,i+s}$. 
Since $|X'|=s+1$, Proposition~\ref{pro:disjoint-trees} implies 
that there are distinct $\alpha',\beta'\in X'$ such that 
$a_{\alpha'}Sy_0$ and $x_0Tb_{\beta'}$ intersect. 
Since $\beta'\geq s+1\geq \beta$, 
we conclude that $x_0Tb_{\beta'}$ is disjoint from the left side of $\cgF$ (except $x_0$).
Therefore, 
the intersection of $a_{\alpha'}Sy_0$ and $x_0Tb_{\beta'}$
 is on the right side of $\cgF$, 
 so $\alpha'<\beta'$, and 
$c_{\alpha'} \leq d_{\beta'}$ in $P$.
Consider also $X''=\set{i+s,\ldots,i+2s}$. 
Again $|X''|=s+1$ and therefore as before, 
we conclude that there exist $\alpha'',\beta''\in X''$ such that 
$c_{\alpha''} \leq d_{\beta''}$ in $P$. 

Now we have 
\[
c_{i} \leq c_{\alpha'} \leq d_{\beta'} < c_{\beta'} \leq c_{i+s}  \leq c_{\alpha''} \leq d_{\beta''} \leq d_{i+2s}\ \text{in $P$.}
\]
This proves Claim~1.
\end{proof}

Now consider the following chain of nine inequalites.
\begin{align*}
d_{s+1} &< c_{s+1} < d_{3s+1} < c_{3s+2h} < d_{5s+2h}\\
&< c_{5s+2h} < d_{7s+2h} < c_{7s+4h-1} < d_{9s+4h-1}\\
&< c_{9s+4h-1}\ \text{in $P$.}
\end{align*}
The first, third, fifth, seventh, and ninth follow 
by~\eqref{eq:universal}. 
The second, fourth, sixth, and eighth inequalities follow by Claim~1.

Let $X_1=\set{3s+1,\ldots,3s+2h}$ and 
$X_2=\set{7s+2h,\ldots,7s+4h-1}$.
Note that $|X_1|=|X_2|=2h$. 

Let $N=N(u,v)$ be a separating path associated with 
$a_{5s+2h}< b_{s+1}$ in $P$ such that 
$x_0Tu=x_0Nu$ and $vSy_0=vNy_0$. 
Let $W=vNu$. 

Now consider an element $b_j$ with $j\in X_2$.
Recall that $x_0Tb_j$ leaves the right side of $\cgF$ at element $d_j$.
Since $c_{5s+2h} < d_{7s+2h}\leq d_j$ in $P$, 
the first edge of $x_0Tb$ 
that is not on the right side of $\cgF$ 
is left of $N$.
Proposition~\ref{pro:self-evident} implies that 
either all edges and vertices 
of $d_jTb_j$ (except $d_j$) are left of $N$ or 
$d_jTb_j$ intersects $N$ at a point distinct from $d_j$. 
We want to rule out the second possibility. 
Suppose to the contrary that $w\neq d_j$ is an element common to 
$d_jTb_j$ and $N$. 
Then $w$ is in the interior of $\cgF$. 
The only section of $N$ in the interior of $\cgF$ is 
$d_{s+1}Nc_{5s+2h}$, except its endpoints.
Note that $w$ cannot be in $vNc_{5s+2h}$ as 
this would imply $v<c_{5s+2h}<d_j<v$ in $P$. 
Clearly that is impossible.
Next, $w$ is not on $d_{s+1}Nu$ as $T$ is a tree. 
Finally, if
$w$ is on $W=uNv$ then 
$W$ intersects $x_0Tb_j$ and $x_0Tb_{s+1}$, so
Proposition~\ref{pro:no-skip} implies 
$W$ intersects $x_0Tb_{5s+2h}$, which implies 
$a_{5s+2h}\leq u \leq w \leq b_{5s+2h}$ in $P$, 
a clear contradiction.
Thus, indeed $b_j$ is left of $N$, for all $j\in X_2$.

\smallskip
\noindent
\textbf{Claim 2.}\quad 
$a_{9s+4h-1}Sy_0$ does not intersect $W$.

\begin{proof}
Suppose the conclusion does not hold and
take $w$ to be 
the greatest element in $P$ common to $W$ and $a_{9s+4h-1}Sy_0$.
Note that $D=wSc_{9s+4h-1}Nw$ is a cycle in $G$ 
with $D\subseteq U_P(a_{5s+2h})$. 
Proposition~\ref{pro:strict-alternating} implies that 
all elements $a_i$ with $i\neq 5s+2h$ are in the exterior of $D$. 

Consider an element $a_i$ with $i\in X_2$. 
Since $c_{5s+2h}<c_{7s+2h} \leq c_i \leq c_{7s+4h-1}<c_{9s+4h-1}$ in $P$, 
we conclude that the first edge of $c_iSa_i$ is in the interior of $D$. 
Since $a_i$ is in the exterior of $D$, 
the path $c_iSa_i$ has to intersect $D$ in a vertex distinct from $c_i$. 
Let $z$ be the least element common to $a_iSc_i$ and $D$. 
Since all of $D$, except $uWw$, is contained in the tree $S$,  
we must have $z$ in the interior of $uWw$.
It follows that all edges and vertices of $a_iSz$, except $z$ 
are in the exterior of $D$. 

Let $e_0$ and $e_1$ be edges incident to $z$ traversing $D$ 
counterclockwise. 
Let $e$ be the first edge of $zSa_i$. 
Now since $e$ is in the exterior of $D$, 
we have $e$ right of $e_1$ in the $(z,e_0)$-ordering.
Since $e_0$ and $e_1$ are consecutive edges in $N$, 
we conclude that $e$ is right of $N$.

We assert that $a_i$ is right of $N$. 
If this statement fails to hold then there is an element 
$z'$ distinct from $z$ common to $N$ and $zSa_i$.
Note that $z'$ is not in $vNc_{5s+2h}=vSc_{5s+2h}$ as 
$S$ is a tree. 
Note also that $z'$ is not in $d_{s+1}Nu=d_{s+1}Tu$ as 
$\cgF$ is a $Z$-face.
Also $z'$ is not in $vNz=vWz$ by the choice of $z$. 
Finally, $z'$ is not in $zNu=zWu$ as this way
$zWz'$ implies $z< z'$ in $P$, and
$z'Sz$ implies $z' < z$ in $P$, which is a contradiction.
This completes the proof that 
$a_i$ is right of $N$.

Thus, all elements of 
$A(X_2)$ are right of $N$.
However, this is impossible, by Proposition~\ref{pro:separate} as $N$ now separates $A(X_2)$ from $B(X_2)$, and $|X_2|>2h-1$.
This completes the proof of Claim~2.
\end{proof}

Let $N'=N'(u',v')$ be a separating path associated with  
$a_{9s+4h-1}<b_{5s+2h}$ such that 
$x_0Nu'=x_0Tu'$ and $v'Ny_0=v'Sy_0$.
Let $W'=v'Nu'$. 
The proof of the following claim is dual to the proof of Claim~2.

\smallskip
\noindent
\textbf{Claim 3.}\quad 
$x_0Tb_{s+1}$ does not intersect $W'$.

Note that $u'\parallel a_{5s+2h}$ in $P$. 
Indeed, if $a_{5s+2h}\leq u'$ in $P$ then $a_{5s+2h}\leq u'\leq b_{5s+2h}$ in $P$ 
which is false.
Also, if $u'\leq a_{5s+2h}$ in $P$, 
then $x_0 < u' \leq a_{5s+2h} < y_0$ in $P$, 
so we have a chain from $x_0$ to $y_0$ that goes through $u'$ which is in the interior of $\cgF$. This contradicts the fact that $\cgF$ is a $Z$-face.
Now Proposition~\ref{pro:everything} implies that $u'$ is right of $N$.

Now we argue that $v'$ is left of $N$. 
Proposition~\ref{pro:everything} implies that 
either $v'$ is left of $N$ or 
$v'Sy_0$ intersects $x_0Nv$. 
However, Claim~2 forbids intersections of $v'Sy_0$ and $uNv=uWv$. 
Also, $v'Sy_0$ does not intersect $x_0Nu=x_0Tu$ as the first branches of the right side at $c_{9s+4h-1}$ and the latter branches of the right side at $d_{s+1}$.
Thus, $v'$ is left of $N$.

Finally, recall that $W'$ is a witnessing path from $v'$ to $u'$. 
Since $v'$ is left of $N$ and $u'$ is right of $N$, $W'$ must intersect $N$, say at element $w$. Since all elements on $W'$ are interior in $\cgF$, $w$ must be in the section $d_{s+1}Nc_{5s+2h}$. 
However, Claim~3 says that $W'$ is disjoint from $d_{s+1}Nu=d_{s+1}Tu$. 
So $w$ must be in $uNc_{5s+2h}$. 
But now we have 
\[
a_{5s+2h}\leq c_{5s+2h}\leq w \leq u'\leq b_{5s+2h}\ \text{in $P$},
\]
which is the final contradiction.
\end{proof}
And as noted
previously, this completes the proof of Lemma~\ref{lem:36}, as well as 
the principal theorem of the paper.

\section{Closing Comments}

Since we have not been able to disprove that $\dim(P) = \Oh(h)$ 
we comment that our proof for $\Oh(h^6)$
has three steps where improvements might be possible.
Do we really need the $\Oh(h^3)$ factor in the transition from
singly constrained to doubly constrained set of incomparable pairs?  
When $I$ is a set of doubly constrained pairs, 
did we need another factor of $h$ to
transition to the doubly exposed case? 
Could $\dim(I)$ be linear in 
$\rho(\mathbb{F})$ when $\mathbb{F}=(P,G,x_0,y_0,I,\cgD)$ is doubly exposed? 

Although we believe the establishment of a polynomial bound
for dimension in terms of height for posets with planar cover 
graphs is intrinsically interesting,
we find the results of Section~\ref{sec:dim-and-breadth},
where height plays no role, particularly intriguing.
Indeed, we hope that insights from this line of research
may help to resolve the following long-standing conjecture.

The \emph{standard example number} $\se(P)$ of a poset $P$ is 
$1$, if $P$ does not contain a copy of the standard example $S_2$; 
otherwise it is the largest integer $d\geq2$ such that 
$P$ contains a copy of $S_d$.
Clearly, $\se(P)\leq \dim(P)$, for all posets $P$. 
A class $\cgC$ of posets is \emph{dim-bounded} if 
there is a function $f$ such that 
$\dim(P)\leq f(\se(P))$, for all $P$ in $\cgC$.

\begin{conjecture}\label{con:dim-boundedness}
The class of posets with planar cover graphs is dim-bounded.
\end{conjecture}

Apparently, the first reference in print to 
Conjecture~\ref{con:dim-boundedness} is in an informal comment on 
page 119 of~\cite{Tro-book}, published in 1991.  However, the problem 
goes back at least 10 years earlier.  In 1978, Trotter~\cite{Trot78a}
showed that there are posets that have large dimension and have planar
cover graphs.  In 1981, Kelly~\cite{Kel81} showed that there are posets
that have large dimension and have planar order diagrams.  In both of
these constructions, the fact that the posets have large dimension
is evidenced by large standard examples that they contain.  The belief
that large standard examples are necessary for large dimension
among posets with planar cover graphs grew naturally from these
observations.

To attack Conjecture~\ref{con:dim-boundedness}, it
is tempting to believe that we can
achieve a transition from a singly constrained poset to
a doubly exposed poset, independent of height, by allowing
a considerable reduction in the dimension $d$.

\medskip
\noindent
\textbf{Acknowledgment.}
The authors are grateful to Stefan Felsner, Tomasz Krawczyk, Micha{\l} Seweryn, and Heather
Smith Blake for helpful conversations concerning arguments for
the results in this paper.  We are also grateful to two anonymous referees for a careful reading of a preliminary version of this paper and for their helpful suggestions for improvements.

\bibliographystyle{plain}
\bibliography{height-poly-r2}

\end{document}